\DeclareMathOperator{\Char}{char}
\DeclareMathOperator{\Hf}{\mathcal{H}}
\DeclareMathOperator{\Span}{Span}
\DeclareMathOperator{\Tor}{Tor}
\DeclareMathOperator{\EGH}{EGH} 
\newcommand{\floor}[1]{\lfloor #1 \rfloor}
\newtheorem{theorem}{Theorem}[section]
\newtheorem{lemma}[theorem]{Lemma}
\newtheorem{proposition}[theorem]{Proposition}
\newtheorem{conjecture}[theorem]{Conjecture}
\newtheorem{main-conjecture}[theorem]{Main Conjecture}
\newtheorem{lem-def}[theorem]{Lemma and Definition}
\newtheorem{prop-def}[theorem]{Proposition and Definition}
\newtheorem{observe/question}[theorem]{Observation/Question}
\theoremstyle{definition}
\newtheorem{definition}[theorem]{Definition} 
\newtheorem{remark}[theorem]{Remark}
\newtheorem{rem-def}[theorem]{Remark and Definition}
\newtheorem{example}[theorem]{Example}
\newtheorem{question}[theorem]{Question}
\newcommand{\twodigit}[1]{\INTVAL=#1\relax\ifnum\INTVAL<10 0\fi\the\INTVAL}
\newcommand\rightnow{
            \twodigit{\number\day}.\space
            \ifcase\month\or January\or February\or March\or April\or
May\or June\or July\or August\or September\or October\or November\or
December\fi
            \space\number\year}
\title[A Survey on The $\EGH$ conjecture ]{A survey on The Eisenbud-Green-Harris Conjecture} 
\author{Sema G\"unt\"urk\"un}
\address{Department of Mathematics and Statistics, Amherst College \\  Seeley Mudd Bldg., 31 Quadrangle Dr., Amherst, MA 01002, USA}
\email{sgunturkun@amherst.edu}
\subjclass[2010]{13D40, 13A02, 13A15}
\keywords{Hilbert function, lexicographic ideal, lex-plus-powers ideal, regular sequence}
\renewcommand*{\backref}[1]{}
\renewcommand*{\backrefalt}[4]{%
 \ifcase #1 (Not cited.)%
   \or        (Cited on page~#2.)%
    \else      (Cited on pages~#2.)%
    \fi}
\begin{document}



\begin{abstract} 
The Eisenbud-Green-Harris (EGH) conjecture offers a generalization of the famous Macaulay's theorem about the Hilbert functions of homogeneous ideals in a polynomial ring $K[x_1,\ldots, x_n]$.
In this survey paper, we provide a good compilation of results on the EGH conjecture that have been obtained so far. We discuss these results in terms of their approaches. 
 \end{abstract}

\maketitle 

\section{Introduction}  

Let $I$ be a homogeneous ideal given in a standard graded polynomial ring $R$ in $n$ variables over a field $K$, and let $I_d$ denote the degree $d$ graded component of $I$. 
Assuming the $K$-dimension of $I_d$ is known, it sounds a quite simple question to ask what one can say about the dimension of the graded component of $I$ in degree $d+1$, and yet it attracts a lot of attention in commutative algebra and algebraic geometry.  An answer to this question was given by Macaulay's breakthrough work \cite{Ma} by providing a numerical bound for the growth of Hilbert function depending on the value  at the preceding degree. He showed that Hilbert functions of  special monomial ideals, called \textit{lexicographic} ideals,  describe all possible Hilbert functions of homogeneous ideals in $R$.  Macaulay's result led to other classical results on Hilbert functions such as Gotzmann's Persistence Theorem and Green's Hyperplane Restriction Theorem (see \cite{BH} for nice treatments of all these theorems). 

Generalizations of Macaulay's result on the extremal behavior of lexicographic ideals for Hilbert functions allows to relate a homogeneous ideal containing the powers of variables with a monomial ideal containing the same powers of variables (see \cite{Kr,Ka,CL}).

In their \emph{Higher Castelnuovo Theory} paper, Eisenbud, Green and Harris conjectured a further generalization of Macaulay's theorem for homogeneous ideals containing a regular sequence in certain degrees (see Conjecture \ref{EGH}).  Eisenbud-Green-Harris (EGH) conjecture, motivated by Cayley-Bacharach theorems, suggests a refinement of Macaulay's bound on the growth of the Hilbert function by involving the information of degrees of the regular sequence contained in the ideal minimally. Although there are notable works done on Eisenbud-Green-Harris (EGH) conjecture, it is still widely open after more than  25 years.

A survey on lex-plus-powers ideals by Francisco and Richert \cite{FR} provides a very good source to understand these special monomial ideals thoroughly, and it also discusses the EGH conjecture and its equivalent variations in details. Since there have been significant progress on the EGH conjecture since \cite{FR}, the main intent of our survey paper is to contribute the literature by providing the current state of the EGH conjecture and to assemble the results that have been obtained so far.  

As plan of this paper, in \textsection 2 we state some preliminaries and review Macaulay's results on Hilbert function. Section 3 lays out the Eisenbud-Green-Harris (EGH) conjecture and its variations.  In \textsection 4, we present the results obtained on the EGH conjecture by grouping them in terms of their approaches. Finally, in \textsection 5 we point out the open cases of the EGH conjecture, and we recall a closely related conjecture known as the Lex-Plus-Powers conjecture. We conclude the final section with some applications of EGH.

%
%
%
%
%
%
%
%
%
%
%
%
 \section{Preliminaries and Macaulay's theorem on Hilbert functions}  

We let $R$ be  the polynomial ring $K[x_1,\ldots, x_n]$ over a field $K$ with standard grading $ R = \oplus_{i\geq 0} R_i$ where $R_i$ is the $i$-th graded component. 

We fix the lexicographic order as $x_1 >_{\text{lex}} x_2 >_{\text{lex}} \ldots >_{\text{lex}} x_n$.  Then we define the monomial order between two monomials of the same degree as  $x_1^{a_1}x_2^{a_2}\cdots x_n^{a_n} >_{\text{lex}} x_1^{b_1}x_2^{b_2}\cdots x_n^{b_n}$  if $a_i > b_i$ where $i$ is the smallest index such that $a_i \neq b_i$.  For the sake of simplicity, we use $>$ for $>_{\text{lex}}$.

\begin{definition} Let $I$ be a monomial ideal in $R$ minimally generated by monomials $m_1,\ldots, m_k$. We call $I$ a \textit{lexicographic ideal} or simply a \textit{lex  ideal} if it satisfies the following property: for any monomial  $m'$ in $R$ with $\deg m' = \deg m_i$ and  $m'>m_i$ for some $i=1,\ldots, k$, then $m' \in I$ as well. 
\end{definition}
We next define another special type of monomial ideal in our context. 

\begin{definition} For given $2\leq a_1 \leq a_2 \leq \ldots \leq a_n$, we call a monomial ideal $L$ a \textit{lex-plus-powers ideal associated with degree $(a_1,\ldots, a_n)$} if it can be written as
\[ L = (x_1^{a_1}, \ldots, x_n^{a_n}) + J\]
where $J$ is a lex ideal in $R$. 
\end{definition}

%
%

For any homogeneous ideal $I$ we define the \textit{Hilbert function} of $R/I$ as
\[\Hf_{R/I}(t) = \dim_K (R/I)_t  = \dim_K R_t - \dim_K I_t\]
where $R_t$ is the degree $t$ graded component of $R$ with $\dim_K R_t = \binom{n-1+t}{t}$, and $I_t$ is the degree $t$ graded component of the ideal $I$.



\begin{example} In $K[x,y,z]$, consider the monomial ideal $I = (x^3, xy, y^4, yz, z^2)$. Then simple computations give us the graded components of $R/I$; 
$(R/I)_1 = \text{$K$-}\Span\{ x,y,z\}$, $(R/I)_2 = \text{$K$-}\Span\{ x^2, xz, y^2 \}$, $(R/I)_3 = \text{$K$-}\Span\{ x^2z, y^3\}$ and $dim_K(R/I)_i = 0$ for $i\geq 4$. Therefore, one expresses the Hilbert function of $R/I$ as $(h_0, h_1, h_2, h_3) = (1, 3, 3, 2)$ where $h_i =\Hf_{R/I}(i)$ and $h_i=0$ for $i\geq 4$.
\end{example}

As we see in the above example, it is possible to compute the Hilbert function of monomial ideals even by hand, however for arbitrary homogeneous ideals it becomes challenging to calculate without using a software such as Macaulay 2 (see \cite{M2} package \textit{LexIdeals}, command \verb|hilbertFunct|).

We call a sequence $f_1,\ldots, f_r$ of forms in $R = K[x_1,\ldots, x_n]$  \emph{a regular sequence} of length $r$ if, for each $i=1,\ldots,r$, $f_i$ is a nonzero divisor for the ring $R/(f_1,\ldots, f_{i-1})$.  If $r = n$, then the regular sequence has full length, in this case, it is referred as a maximal regular sequence.

\begin{remark}\label{HF_of_ci}  Let $\mathfrak{c}$ be a homogeneous ideal in $R$ generated by a regular sequence $f_1,\ldots, f_n$ with $\deg f_i = a_i$ for $i=1,\ldots, n$. We call $\mathfrak{c}$ a \textit{complete intersection ideal of type $(a_1,\ldots, a_n)$} and the ring $R/\mathfrak{c}$ is called a complete intersection ring. Then the Hilbert function of $R/\mathfrak{c}$ is $\Hf_{R/\mathfrak{c}}(i) = \Hf_{R/(x_1^{a_1},\ldots, x_n^{a_n})}(i)$ for all $i\geq 0$.

Furthermore,  $\Hf_{R/(x_1^{a_1},\ldots, x_n^{a_n})}(i) =  \Hf_{R/(x_1^{a_1},\ldots, x_n^{a_n})}(s-i)$
where $s = \sum\limits_{i}^{n}(a_i-1)$. 
\end{remark}

The following proposition provides a very useful relation between the Hilbert function of an ideal $I$ and the Hilbert function of another ideal generated by a regular sequence contained in $I$ under the liaison (see \cite[Theorem 3]{DGO}). 
\begin{proposition}\label{dual} Let $I$ be a homogeneous ideal and $\mathfrak{c} \subseteq I$ a complete intersection ideal, and $s= \sum\limits_{i=1}^n(a_i-1)$. Then, for all $j\geq 0$,
\[  \Hf_{R/I}(j) = \Hf_{R/\mathfrak{c}}(j) - \Hf_{R/(\mathfrak{c}:I)}(s-j) \]
\end{proposition}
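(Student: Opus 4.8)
The plan is to use liaison (linkage) theory for the complete intersection $\mathfrak{c}$, exploiting the duality between the linked ideals $I$ and $\mathfrak{c}:I$. The key algebraic input is that $R/\mathfrak{c}$ is a graded Artinian Gorenstein ring with socle degree $s=\sum_{i=1}^n(a_i-1)$; equivalently, by Remark~\ref{HF_of_ci}, its Hilbert function is symmetric about $s/2$, so $\Hf_{R/\mathfrak{c}}(j)=\Hf_{R/\mathfrak{c}}(s-j)$ for all $j$. Because $\mathfrak{c}\subseteq I$, the quotient $I/\mathfrak{c}$ is an ideal of the Gorenstein Artinian ring $A=R/\mathfrak{c}$, and the core of the argument is the statement that in such a ring the annihilator of $I/\mathfrak{c}$ is $(\mathfrak{c}:I)/\mathfrak{c}$, together with Gorenstein duality $\operatorname{Hom}_K((A/\mathfrak{a})_j,K)\cong (\operatorname{Ann}_A\mathfrak{a})_{s-j}$ for any homogeneous ideal $\mathfrak{a}\subseteq A$.

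First I would pass to the Artinian reduction: set $A=R/\mathfrak{c}$, and regard $\bar I=I/\mathfrak{c}\subseteq A$. Then $\Hf_{R/I}(j)=\Hf_{A}(j)-\dim_K\bar I_j$, and $\Hf_{R/(\mathfrak{c}:I)}(s-j)=\Hf_A(s-j)-\dim_K(\overline{\mathfrak{c}:I})_{s-j}$. Since $\Hf_A(j)=\Hf_A(s-j)$, the asserted identity is equivalent to $\dim_K\bar I_j=\dim_K(\overline{\mathfrak{c}:I})_{s-j}$, i.e.\ to the equality of Hilbert functions of $\bar I$ and of $\operatorname{Ann}_A(\bar I)=(\mathfrak{c}:I)/\mathfrak{c}$, up to the reflection $j\mapsto s-j$. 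Second, I would establish the Gorenstein-duality statement: since $A$ is Artinian Gorenstein with one-dimensional socle in degree $s$, the perfect pairing $A_j\times A_{s-j}\to A_s\cong K$ identifies, for each homogeneous ideal $\mathfrak{a}$ of $A$, the subspace $\mathfrak{a}_j$ with the annihilator of $(\operatorname{Ann}_A\mathfrak{a})_{s-j}$ inside the pairing; a short nondegeneracy check then yields $\dim_K\mathfrak{a}_j=\dim_K A_{s-j}-\dim_K(\operatorname{Ann}_A\mathfrak{a})_{s-j}$. Applying this with $\mathfrak{a}=\bar I$ and noting $\operatorname{Ann}_A(\bar I)=(\mathfrak{c}:I)/\mathfrak{c}$ by definition of the colon ideal, I would combine the displayed relations to get exactly $\Hf_{R/I}(j)=\Hf_{R/\mathfrak{c}}(j)-\Hf_{R/(\mathfrak{c}:I)}(s-j)$. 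Alternatively, one can cite \cite[Theorem~3]{DGO} directly; but giving the self-contained Gorenstein-pairing argument makes the survey more useful.

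The main obstacle, and the only genuinely non-formal step, is the verification that the pairing $A_j\times A_{s-j}\to K$ is perfect \emph{and} that it restricts to a perfect pairing between $\mathfrak{a}_j$ and $A_{s-j}/(\operatorname{Ann}_A\mathfrak{a})_{s-j}$; the first is the defining property of a graded Artinian Gorenstein algebra (socle concentrated in degree $s$, of dimension one), while the second reduces to the tautology that $v\in A_{s-j}$ pairs to zero with all of $\mathfrak{a}_j$ if and only if $v\cdot\mathfrak{a}=0$, i.e.\ $v\in\operatorname{Ann}_A\mathfrak{a}$. Once that is in hand, everything else is bookkeeping with Remark~\ref{HF_of_ci} and the identity $\Hf_{R/\mathfrak{c}}=\Hf_{R/(x_1^{a_1},\dots,x_n^{a_n})}$. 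One should also remark that the hypothesis $\mathfrak{c}\subseteq I$ is used exactly to ensure $\bar I$ is an honest ideal of $A$, so that $\mathfrak{c}:I$ makes sense on both sides and the colon/annihilator translation is valid.
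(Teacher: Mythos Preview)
The paper does not supply its own proof of this proposition; it simply cites \cite[Theorem~3]{DGO}. Your Gorenstein-pairing argument is the standard proof (and is essentially the content of \cite{DGO}), so you are giving what the paper only points to, and you even note the option of citing \cite{DGO} directly.

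One small slip in your reduction step: after passing to $A=R/\mathfrak{c}$ and $\bar I=I/\mathfrak{c}$, the $\Hf_A(j)$ terms cancel immediately, and the identity to be proved becomes $\dim_K\bar I_j=\dim_K A_{s-j}-\dim_K(\Ann_A\bar I)_{s-j}$, not $\dim_K\bar I_j=\dim_K(\overline{\mathfrak{c}:I})_{s-j}$ as you wrote; the symmetry $\Hf_A(j)=\Hf_A(s-j)$ is not actually used here. Fortunately, the pairing formula you derive in the next paragraph, $\dim_K\mathfrak{a}_j=\dim_K A_{s-j}-\dim_K(\Ann_A\mathfrak{a})_{s-j}$, is exactly the correct target, so the argument goes through once you fix that intermediate sentence.
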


\medskip

For a given two positive integers $d$ and $i$,  
the  \textit{$i$-th Macaulay representation of  $d$ } (also known as the $i$-th binomial expansion of $d$), denoted $d^{(a)}$, is given by $$d = \binom{m_i}{i}+\binom{m_{i-1}}{i-1} + \ldots + \binom{m_2}{2}+\binom{m_1}{1} $$ where $m_i > m_{i-1} > \ldots >  m_1 \geq 0$ are uniquely determined and are called the $i$-th Macaulay coefficients of $d$.  In this case, we let
 \[d^{\langle i \rangle} := \binom{m_i+1}{i+1}+\binom{m_{i-1}+1}{i} + \ldots + \binom{m_2+1}{3}+\binom{m_1+1}{2}.\]

To give a simple example, let $d=21$, $i=4$. Then $21^{(4)} = \binom{6}{4} + \binom{4}{3} +\binom{2}{2}+\binom{1}{1}$, therefore $21^{\langle 4 \rangle }  = \binom{7}{5} + \binom{5}{4} +\binom{3}{3} +\binom{2}{2}= 28$.

We next state Macaulay's well-known theorem on Hilbert functions.

\begin{theorem}[\cite{Ma, BH}] Let $I$ be a homogeneous ideal in the polynomial ring $R$.
\begin{itemize}
\item[(a)]  There is a lex ideal in $R$ with the same Hilbert function, and this lex ideal is uniquely determined.  
\item[(b)] [\textit{Macaulay's bound}]  If $\Hf_{R/I}(i) = d$ then 
\[ \Hf_{R/I}(i+1) \leq d^{\langle i \rangle}.\]
\end{itemize}
\end{theorem}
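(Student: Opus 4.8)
The plan is to prove Macaulay's theorem by combining a combinatorial construction with a compression/specialization argument, treating the two parts in the order (a) then (b), since the existence of the lex ideal is the real engine and (b) falls out of understanding how lex ideals grow. First I would fix the combinatorial backbone: for each degree $i$ and each integer $0 \le d \le \dim_K R_i$, there is a canonical choice of $d$ monomials of degree $i$, namely the $d$ largest in the lex order, and I would record the basic fact that if one takes the $d$ largest degree-$i$ monomials and multiplies the set by all of $R_1$, the resulting set of degree-$(i+1)$ monomials has cardinality exactly $d^{\langle i\rangle}$. This is the purely numerical heart of the statement, and it is proved by a careful bookkeeping argument on Macaulay representations: writing $d = \binom{m_i}{i} + \cdots + \binom{m_1}{1}$ corresponds to grouping the lex-largest monomials according to the exponent of $x_1$, then recursing in the remaining variables, and the shift $m_j \mapsto m_j+1$ precisely accounts for multiplying by $x_1, \ldots, x_n$. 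I expect this lemma to be the main obstacle, as it is where all the real combinatorial content sits.

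Granting that lemma, for part (a) I would proceed degree by degree. Set $h_i = \Hf_{R/I}(i)$ and $c_i = \dim_K R_i - h_i = \dim_K I_i$. Define $L$ to be the monomial ideal whose degree-$i$ component is spanned by the $c_i$ largest monomials of degree $i$, for every $i$. Two things must be checked: that $L$ really is an ideal (i.e., $R_1 \cdot L_i \subseteq L_{i+1}$), and that $L$ is lex. The lex property is immediate from the construction. For the ideal property, the key input is the growth bound on the algebra side: by Green's/Macaulay's inductive argument — passing to initial ideals with respect to a generic coordinate change so that $\mathrm{in}(I)$ is a monomial ideal with the same Hilbert function, then using the exact sequence relating $R/I$ in degrees $i$ and $i+1$ via multiplication by a generic linear form — one shows $c_{i+1} \ge c_i^{\langle i\rangle}$, equivalently $h_{i+1} \le h_i^{\langle i\rangle}$, which is exactly the content of (b). Combined with the lemma (which says the $c_i$ largest monomials multiply up to at least, in fact exactly, $c_i^{\langle i\rangle}$ monomials, all of which are among the $c_{i+1}$ largest), this forces $R_1 \cdot L_i \subseteq L_{i+1}$, so $L$ is an ideal with $\Hf_{R/L} = \Hf_{R/I}$. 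Uniqueness is clear since the Hilbert function of a lex ideal determines each graded piece as "the top $c_i$ monomials."

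The remaining piece is to actually establish the inequality $c_{i+1} \ge c_i^{\langle i\rangle}$ that part (b) asserts; here I would cite the standard reduction to the monomial case via generic initial ideals (Galligo/Bayer–Stillman), after which one is comparing a monomial ideal to its lex companion, and then invoke a compression argument: among all monomial ideals with a fixed Hilbert function, the lex ideal minimizes the growth, a statement proved by iterated "compression" moves in each pair of variables that do not increase the number of minimal generators in the next degree while pushing the ideal toward lex. Alternatively, one can run the whole thing through the combinatorial lemma directly: any monomial ideal $J$ with $\dim_K J_i = c_i$ satisfies $\dim_K(R_1 \cdot J_i) \ge c_i^{\langle i\rangle}$ because the lex set is extremal for the shadow, which is precisely the Kruskal–Katona-type inequality packaged in the lemma. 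Either route works; the delicate point in both is the verification that the extremal configuration is the lex one, so I would organize the writeup to isolate that single extremality lemma and then derive both (a) and (b) from it in a few lines.
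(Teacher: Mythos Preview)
The paper does not prove this theorem; it is stated as a classical result with references to Macaulay \cite{Ma} and Bruns--Herzog \cite{BH}, so there is no in-paper argument to compare against. Your outline is the standard modern route (reduce to monomial ideals via generic initial ideals, then establish the extremality of lex segments for shadows), and it would succeed once a misstatement is repaired.

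The error is in your combinatorial lemma. You claim that the $d$ lex-largest monomials of degree $i$, multiplied by $R_1$, produce exactly $d^{\langle i\rangle}$ monomials of degree $i+1$. This is false: take $R=K[x,y]$, $i=1$, $d=1$; the single monomial $x$ multiplies up to $\{x^2,xy\}$, two monomials, while $1^{\langle 1\rangle}=\binom{2}{2}=1$. The operator $d\mapsto d^{\langle i\rangle}$ governs the growth of the \emph{quotient}, not of the ideal piece. The correct lemma is: if $L_i$ is the lex segment consisting of the top $c_i$ monomials and $h_i=\dim_K R_i-c_i$, then $R_1\cdot L_i$ is again a lex segment and $\dim_K R_{i+1}-|R_1\cdot L_i|=h_i^{\langle i\rangle}$. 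Consequently your claim that ``$c_{i+1}\ge c_i^{\langle i\rangle}$, equivalently $h_{i+1}\le h_i^{\langle i\rangle}$'' is also wrong: these two inequalities are not equivalent (same example: $n=2$, $i=1$, $c_1=h_1=1$ gives $c_1^{\langle 1\rangle}+h_1^{\langle 1\rangle}=2\neq 3=\dim_K R_2$). Once you restate the lemma on the quotient side, the logic you intend---prove (b) by compression or via the generic initial ideal, then use (b) together with the corrected lemma to verify that your degree-wise lex construction $L$ satisfies $R_1\cdot L_i\subseteq L_{i+1}$, yielding (a)---goes through exactly as you describe.
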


\begin{example} \label{MacBound-R} 
Notice that $\Hf_R(i) = \binom{n+i-1}{i} = d$ and so the $i$-th Macaulay representation of $d$ is $d^{( i )}= \binom{n-1+i}{i}$. Computing $\Hf_R(i+1) = \binom{n+i}{i+1}$ shows that $R$ attains exactly Macaulay's bound $d^{\langle i \rangle}$. 
\end{example}

The following important theorem shows when a homogeneous ideal carries a similar behavior of attaining Macaulay's bound as in Example \ref{MacBound-R}.

\begin{theorem}[Gotzmann's Persistence Theorem \cite{Gotzmann,BH}]
Let $I$ be a homogeneous ideal in $R$ generated by forms of degree $\leq d$. If the Hilbert function of $R/I$ achieves Macaulay's bound in the next degree $d+1$, that is $\Hf_{R/I}(d+1) = \Hf_{R/I}(d)^{\langle d\rangle},$
then \[ \Hf_{R/I}(j) = \Hf_{R/I}(j-1)^{\langle d\rangle} \quad \text{for all} \quad j \geq d.\]
\end{theorem}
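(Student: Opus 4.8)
\medskip

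The plan is to prove the statement by induction on $n$, the number of variables, passing at each stage to a generic hyperplane section and invoking Green's Hyperplane Restriction Theorem; but first I would reduce to a single step. \textbf{Reduction.} It suffices to prove: if $I$ is generated in degrees $\le d$ and $\Hf_{R/I}(d+1)=\Hf_{R/I}(d)^{\langle d\rangle}$, then $\Hf_{R/I}(d+2)=\Hf_{R/I}(d+1)^{\langle d+1\rangle}$. Indeed, let $I'=(I_{d+1})$. Since $I$ is generated in degrees $\le d$, for every $e\le d$ and $j\ge d+1$ one has $R_{j-e}I_e=R_{j-d-1}R_{d+1-e}I_e\subseteq R_{j-d-1}I_{d+1}$, so $I'_j=I_j$ for all $j\ge d+1$; thus $I'$ is generated in degrees $\le d+1$ and has the same Hilbert function as $I$ from degree $d+1$ on. Applying the single-step statement to $I$ (giving equality at $d+2$) and then to $I'$ (with $d$ replaced by $d+1$), and iterating, yields $\Hf_{R/I}(j)=\Hf_{R/I}(j-1)^{\langle j-1\rangle}$ for all $j\ge d+1$; since $h\mapsto h^{\langle m\rangle}$ merely raises each Macaulay coefficient of $h$ by one, these are exactly the asserted values and the full Hilbert function of $R/I$ is pinned down.

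\medskip

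\textbf{The inductive step.} After a generic change of coordinates put $\ell=x_n$, let $\bar R=R/\ell R\cong K[x_1,\dots,x_{n-1}]$ and let $\bar I$ be the image of $I$ in $\bar R$; it is generated in degrees $\le d$. For each $j$ there is an exact sequence
\[0\longrightarrow\bigl((I:\ell)/I\bigr)_{j-1}\longrightarrow(R/I)_{j-1}\xrightarrow{\ \cdot\ell\ }(R/I)_{j}\longrightarrow(\bar R/\bar I)_{j}\longrightarrow0,\]
whence $\Hf_{\bar R/\bar I}(j)=\Hf_{R/I}(j)-\Hf_{R/I}(j-1)+\Hf_{(I:\ell)/I}(j-1)\ge\Hf_{R/I}(j)-\Hf_{R/I}(j-1)$, while Green's theorem gives $\Hf_{\bar R/\bar I}(j)\le\bigl(\Hf_{R/I}(j)\bigr)_{\langle j\rangle}$, where $h_{\langle j\rangle}$ lowers each Macaulay coefficient by one. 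Writing $b=\Hf_{R/I}(d)$ and $a=\Hf_{R/I}(d+1)=b^{\langle d\rangle}$, a short computation with Pascal's identity gives $(b_{\langle d\rangle})^{\langle d\rangle}=b^{\langle d\rangle}-b=a-b=\bigl(b^{\langle d\rangle}\bigr)_{\langle d+1\rangle}=a_{\langle d+1\rangle}$; combined with the two inequalities above in degree $d+1$ this forces $\Hf_{\bar R/\bar I}(d+1)=a-b$ and $\Hf_{(I:\ell)/I}(d)=0$. Moreover Macaulay's bound applied to $\bar I$ gives $\Hf_{\bar R/\bar I}(d+1)\le\Hf_{\bar R/\bar I}(d)^{\langle d\rangle}\le\bigl(b_{\langle d\rangle}\bigr)^{\langle d\rangle}=a-b$, so in fact $\Hf_{\bar R/\bar I}(d+1)=\Hf_{\bar R/\bar I}(d)^{\langle d\rangle}$; thus $\bar I$ meets the hypotheses of the single-step statement in $n-1$ variables, and by the inductive hypothesis $\Hf_{\bar R/\bar I}(d+2)=\Hf_{\bar R/\bar I}(d+1)^{\langle d+1\rangle}=(a-b)^{\langle d+1\rangle}=a^{\langle d+1\rangle}-a$.

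\medskip

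\textbf{Transferring back, and the main obstacle.} From the exact sequence in degree $d+2$, $\Hf_{R/I}(d+2)=\Hf_{R/I}(d+1)+\Hf_{\bar R/\bar I}(d+2)-\Hf_{(I:\ell)/I}(d+1)=a^{\langle d+1\rangle}-\Hf_{(I:\ell)/I}(d+1)$, so the proof comes down to showing that the $\ell$-torsion of $R/I$ vanishes in degree $d+1$ — and, for the iteration, in all degrees $\ge d$. Equivalently, since for generic $\ell$ the annihilator $(0:_{R/I}\ell)=(I:\ell)/I$ is supported only at $\mathfrak{m}$ and therefore sits inside $H^0_{\mathfrak{m}}(R/I)$, one must show $\reg(I)\le d$ — or at least that $R/I$ has no local cohomology in degrees $\ge d$. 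This is the technical heart of the matter, and where I expect all the real difficulty to lie: it has to be wrung out of the combinatorics of the Macaulay expansion together with the same generic-hyperplane induction, carrying the torsion-vanishing through as part of the inductive statement. Granting it, the identity $(a-b)^{\langle d+1\rangle}=a^{\langle d+1\rangle}-a$ (valid whenever $a=b^{\langle d\rangle}$) is exactly what is needed to propagate the equality into all higher degrees. The base case $n=1$ is immediate, and a secondary point — not a genuine obstruction — is the legitimacy of passing to a generic $\ell$, which rests only on the genericity of Hilbert functions under change of coordinates and so holds over any field.

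\medskip

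\textbf{An alternative route.} One may instead follow Gotzmann's original, more geometric, argument: the data $I_{d+1}=R_1I_d$ together with the $d$-th Macaulay expansion of $\Hf_{R/I}(d)$ single out a closed point of a Hilbert scheme whose Hilbert polynomial is the corresponding ``Gotzmann polynomial'', and Gotzmann's regularity bound then shows that the associated saturated ideal is $d$-regular, from which persistence follows directly. This is conceptually transparent but imports considerably more machinery than has been set up here, so for a self-contained treatment the inductive proof sketched above is preferable.
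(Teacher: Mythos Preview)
The paper does not supply a proof of this theorem at all: Gotzmann's Persistence Theorem is stated as background, with the proof deferred entirely to the citations \cite{Gotzmann,BH}. There is therefore no ``paper's own proof'' to compare your proposal against.

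As for the proposal itself: your outline follows the well-known Green-style argument (generic hyperplane section plus Macaulay combinatorics), and you correctly isolate the crux --- the vanishing of $(I:\ell)/I$ in degrees $\geq d$, equivalently a regularity statement. But you do not actually prove this; you write ``Granting it'' and move on. That is not a gap you can wave past: it is the theorem. In the standard treatments (e.g.\ Bruns--Herzog \S4.3) this step is handled either by reducing to the lex-segment ideal via Macaulay's theorem and arguing combinatorially there, or by strengthening the inductive hypothesis to carry the torsion-vanishing along --- and the latter requires genuine work that your sketch does not supply. Your ``alternative route'' through Hilbert schemes and Gotzmann regularity is indeed Gotzmann's original argument, but as you note it imports machinery well beyond what the survey sets up. A minor further point: the paper's displayed conclusion reads $\Hf_{R/I}(j)=\Hf_{R/I}(j-1)^{\langle d\rangle}$ with the fixed exponent $d$, whereas your argument (correctly, for the standard statement) produces $\Hf_{R/I}(j)=\Hf_{R/I}(j-1)^{\langle j-1\rangle}$; your remark that ``these are exactly the asserted values'' glosses over this discrepancy rather than resolving it.
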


Another classical result on the growth of Hilbert functions worth to mention is given by Green.  This result was also used to give an elegant proof of Macaulay's theorem. 

\begin{theorem}[Green's Hyperplane Restriction Theorem \cite{Gr, BH}]
Let $I$ be a homogeneous ideal in $R$, and let $t\geq 1$ be a given degree. Then 
\[ \Hf_{R/I+(\ell)}(t) \leq \binom{m_t-1}{t}+\binom{m_{t-1}-1}{t-1} + \ldots + \binom{m_2-1}{2}+\binom{m_1-1}{1}\]
where \[\Hf_{R/I}(t)^{(t)} = \binom{m_t}{t}+\binom{m_{t-1}}{t-1} + \ldots + \binom{m_2}{2}+\binom{m_1}{1}\] is the $t$-th Macaulay  representation of $\Hf_{R/I}(t)$.
\end{theorem}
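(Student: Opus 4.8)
Here $\ell$ is to be read as a generic linear form---since Hilbert functions are unchanged by base field extension, we may assume $K$ infinite so that this makes sense---and, equivalently, the displayed expression bounds $\min_{\ell\in R_1}\Hf_{R/(I+(\ell))}(t)$, the minimum being attained on a Zariski-dense open subset of $R_1$ by upper semicontinuity. I will write $B(d)$ for the right-hand side of the bound, that is, the integer obtained from the $t$-th Macaulay representation $d=\binom{m_t}{t}+\cdots+\binom{m_1}{1}$ by replacing each $m_j$ with $m_j-1$. The plan is to reduce, via generic initial ideals, to a direct count for lexicographic ideals.

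First I would reduce to the case of a monomial ideal. After a generic linear change of coordinates take $\ell=x_n$, and replace $I$ by $J=\operatorname{gin}(I)$, the generic initial ideal with respect to the degree-reverse-lexicographic order. Then $J$ is strongly stable (if $x_im\in J$ and $j<i$ then $x_jm\in J$), it has $\Hf_{R/J}=\Hf_{R/I}$, and the characteristic property of the reverse-lexicographic order in this setting is that $\operatorname{gin}(I+(\ell))=\operatorname{gin}(I)+(x_n)$ for generic $\ell$; hence $\Hf_{R/(I+(\ell))}(t)=\Hf_{R/(J+(x_n))}(t)=\dim_K(\bar R/\bar J)_t$, where $\bar R=K[x_1,\ldots,x_{n-1}]$ and $\bar J$ is the image of $J$ modulo $x_n$ (generated by the minimal generators of $J$ not divisible by $x_n$). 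Writing $d=\dim_K(R/J)_t=\Hf_{R/I}(t)$, it now suffices to prove: for every strongly stable ideal $J$ with $\dim_K(R/J)_t=d$ one has $\dim_K(\bar R/\bar J)_t\le B(d)$.

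Next I would reduce to the lexicographic ideal and then compute. Among strongly stable ideals with $\dim_K(R/J)_t$ fixed, $\dim_K(\bar R/\bar J)_t$ is maximized by the lexicographic ideal $L$ whose Hilbert function agrees with that of $R/J$ through degree $t$; this is the heart of the proof and is obtained by a compression (combinatorial shifting) argument, replacing $J_t$ step by step by a monomial set closer to the lex segment while verifying that the number of degree-$t$ monomials outside $J$ avoiding $x_n$ does not drop. For $L$ lex the count is explicit: since $L$ is lex, for each $k\ge 0$ the degree-$t$ monomials outside $L$ whose $x_n$-exponent equals $k$ have the form $\nu x_n^k$ with $\nu$ running over a lex segment of $\bar R_{t-k}$, and these cardinalities are exactly what the Macaulay representation of $d$ records; the $k=0$ stratum has size $\dim_K(\bar R/\bar L)_t$, which this bookkeeping identifies with $B(d)$. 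Combining with the first step finishes the proof.

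The hard part will be the extremality claim in the second step. It is genuinely not formal: re-lexifying the \emph{complement} of a monomial ideal in degree $t$ can \emph{decrease} the count of monomials surviving modulo $x_n$---this already occurs for non-strongly-stable monomial ideals, which is exactly why the $\operatorname{gin}$ reduction to the strongly stable case cannot be skipped---so the compression must be carried out inside the class of strongly stable ideals, and the required monotonicity of $\dim_K(\bar R/\bar J)_t$ under the straightening moves is a term-by-term binomial estimate rather than a consequence of Macaulay's bound. By comparison, the reverse-lexicographic $\operatorname{gin}$ input is standard and citable, and the closing computation for $L$ is routine. (Green's original argument replaces the $\operatorname{gin}$ machinery by a direct induction on $n$, but the combinatorial core is the same.)
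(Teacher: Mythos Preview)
The paper does not prove this theorem: it is stated in Section~2 as classical background with citations to \cite{Gr,BH}, alongside Macaulay's theorem and Gotzmann's persistence theorem, none of which the survey proves. There is therefore no argument in the paper against which to compare your proposal.

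For what it is worth, your outline is a recognizable route to the result (the $\operatorname{gin}$-then-lexify approach, as in Herzog--Hibi and related treatments), with one caveat worth flagging: the claim that $\operatorname{gin}(I)$ is \emph{strongly stable} requires characteristic zero; in positive characteristic the generic initial ideal is only Borel-fixed, so either the compression step must be reworked for that weaker class or one reverts to Green's original characteristic-free induction on $n$, which you mention parenthetically. Apart from that, the Bayer--Stillman property of reverse-lex that you invoke (so that passing to $\operatorname{gin}(I)+(x_n)$ computes the generic hyperplane section) is correct, and the closing lex computation is routine. You have also correctly identified the extremality of the lex segment among strongly stable sets as the genuine combinatorial content.
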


\section{The EGH conjecture}

A generalization of Macaulay's results was considered by studying  homogeneous ideals in  $S=R/(x_1^{a_1},\ldots, x_n^{a_n})$ instead of homogeneous ideals in the polynomial ring $R=K[x_1,\ldots, x_n]$.  The existence of the lex ideal in $S$ with the same Hilbert function was shown by Kruskal \cite{Kr} and Katona \cite{Ka} when $a_1=a_2=\ldots=a_n = 2$ and more generally when $2\leq a_1\leq \ldots \leq a_n$ was done by Clements and Lindstr\"om \cite{CL} and also by Greene-Kleitman \cite{GrKl}. These results were obtained in set theoretical and combinatorial settings.

A question can be raised for a similar behavior for the homogeneous ideals in $R/(f_1,\ldots, f_n)$ where $f_1,\ldots, f_n$ is a regular sequence in $R$.  In \cite{EGH}, Eisenbud, Green and Harris initially stated the following conjecture for the case of  regular sequence of quadrics.

\begin{conjecture}\label{orig-EGH} Given homogeneous ideal $I$ in $R$ containing a full length regular sequence of quadratic forms.  Let $\Hf_{R/I}(i) = h$ and the $i$-th Macaulay representation of $h$ be $$h^{(i)} = \binom{m_i}{i}+\binom{m_{i-1}}{i-1} + \ldots + \binom{m_2}{2}+\binom{m_1}{1}$$ where $m_i > m_{i-1} > \ldots \geq m_1 \geq 0$. 

Then \[\Hf_{R/I}(i+1) \leq \binom{m_i}{i+1}+\binom{m_{i-1}}{i} + \ldots +\binom{m_1}{2}.\]
\end{conjecture}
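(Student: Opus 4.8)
Write $A=R/I$; this statement is the Eisenbud--Green--Harris conjecture itself for a full-length regular sequence of quadrics, still open in general, so what follows is the natural line of attack together with the point at which it stalls. \emph{First, a cleaner form of the bound.} Term by term, Pascal's rule gives $\binom{m_j+1}{j+1}-\binom{m_j}{j}=\binom{m_j}{j+1}$, so the asserted right-hand side equals $\Hf_A(i)^{\langle i\rangle}-\Hf_A(i)$; the claim is therefore that $\Hf_A(i+1)\le\Hf_A(i)^{\langle i\rangle}-\Hf_A(i)$, i.e.\ Macaulay's bound from the previous degree diminished by the value there. The form I would actually work with (cf.\ \cite{FR}) is equivalent: it suffices to produce a lex-plus-powers ideal $L=(x_1^2,\dots,x_n^2)+J$ with $\Hf_{R/L}=\Hf_A$, for once such an $L$ exists the inequality is automatic --- the Hilbert functions of quotients of $S:=R/(x_1^2,\dots,x_n^2)$ obey exactly this growth bound, which is the Macaulay theorem for $S$ and follows from the Clements--Lindström theorem, the admissible values being read off from $\dim_K S_t$ (cf.\ Remark~\ref{HF_of_ci}).

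\emph{The monomial case is then free.} If the regular sequence of quadrics inside $I$ may be chosen to be the coordinate squares $x_1^2,\dots,x_n^2$ --- in particular if $I$ is a monomial ideal containing them --- then $A$ is a quotient of $S$, and Clements--Lindström \cite{CL} (in the form valid for arbitrary graded ideals of $S$) supplies a lex ideal of $S$ with the same Hilbert function as the image of $I$; lifting it as above settles the conjecture in this case.

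\emph{The reduction, and the main obstacle.} What remains is to pass from a \emph{general} regular sequence of quadrics $f_1,\dots,f_n\subseteq I$ to the coordinate squares without changing the Hilbert function. The natural device is a degeneration: fix a generic linear change of coordinates $g$ and a term order $\tau$, replace $I$ by $\mathrm{in}_\tau(g\cdot I)$, which has the same Hilbert function and contains the quadratic monomials $\mathrm{in}_\tau(gf_1),\dots,\mathrm{in}_\tau(gf_n)$, and hope that these are $x_1^2,\dots,x_n^2$. \textbf{This is exactly where the argument breaks.} For any fixed term order, a generic linear change drives the leading term of \emph{every} quadric to the single monomial $x_1^2$, so the standard power sequence cannot appear as a set of leading terms of a length-$n$ regular sequence; likewise the generic initial ideal, lex or revlex, fails to contain all of $x_1^2,\dots,x_n^2$; and no more flexible flat degeneration is known that simultaneously preserves the Hilbert function and produces the standard power regular sequence. ``Straightening'' an arbitrary regular sequence of quadrics into the coordinate squares is essentially the entire content of the conjecture, which is why it is reached in the literature only under extra hypotheses --- monomial $I$, small $n$, the $f_i$ of a special shape such as products of linear forms, or $\dim_K I_i$ small enough for a direct comparison --- each by an argument tailored to that case.

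\emph{Fallback strategies.} Lacking the missing degeneration, two reductions are still available. A generic hyperplane section $\ell$: the image of $I$ in $K[x_1,\dots,x_{n-1}]$ still contains a regular sequence of $n-1$ quadrics, so one may attempt induction on $n$, using Green's Hyperplane Restriction Theorem to bound $\Hf_{A/\ell A}$ and the exact sequence relating $A$, $A/\ell A$ and $0:_A\ell$ to assemble $\Hf_A$; but extracting the sharp bound on $\Hf_A(i+1)$ demands an equally sharp handle on $\dim_K(0:_A\ell)_i$, which is the same obstruction in disguise. And the liaison identity $\Hf_A(j)=\Hf_{R/\mathfrak c}(j)-\Hf_{R/(\mathfrak c:I)}(n-j)$ of Proposition~\ref{dual}, applied with $\mathfrak c=(f_1,\dots,f_n)$ and $s=n$, transfers the conjecture back and forth between $A$ and its link $R/(\mathfrak c:I)$ --- useful for propagating EGH once enough cases are known, but not a means of proving it from nothing. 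So the honest plan is: carry out the reformulation and the monomial case in full, and then attack the straightening step, either by finding the degeneration it needs or by widening the range of tractable hypotheses.
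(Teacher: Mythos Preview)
There is nothing to compare against: the statement is labeled \emph{Conjecture} in the paper and is never proved there. The paper is a survey; it states Conjecture~\ref{orig-EGH}, shows (exactly as you do, via Pascal's rule) that the proposed bound equals $h^{\langle i\rangle}-h$ and hence refines Macaulay's bound, and then records which special cases are known. It explicitly says the conjecture ``is still widely open.''

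Your write-up is therefore appropriate in spirit: you correctly flag that this is the open EGH conjecture for quadrics, you give the standard reformulation as the existence of a lex-plus-powers ideal with the same Hilbert function, you note that Clements--Lindstr\"om disposes of the case where the regular sequence is already $x_1^2,\dots,x_n^2$, and you identify the genuine obstruction --- no known Hilbert-function-preserving degeneration carries an arbitrary quadratic regular sequence to the coordinate squares. Your remarks on hyperplane sections and on the liaison identity of Proposition~\ref{dual} match the tools the paper itself highlights (Green's theorem, Lemma~\ref{lemma}). None of this constitutes a proof, and you say so; that is the correct assessment. The only caution is terminological: do not present this as a ``proof proposal'' at all, since the honest content is an outline of why the problem is hard and which partial cases are settled, which is precisely what the paper does.
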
 

The new bound proposed by Conjecture \ref{orig-EGH} is finer than Macaulay's bound. We can see this
using the binomial identity $\binom{m+1}{k} = \binom{m}{k}+\binom{m}{k-1}$,
\begin{align*}
h^{\langle i \rangle } &= \binom{m_i+1}{i+1}+\binom{m_{i-1}+1}{i} + \ldots +\binom{m_1+1}{2}  \\
&=  \binom{m_i}{i+1} + \binom{m_i}{i} + \binom{m_{i-1}}{i}+\binom{m_{i-1}}{i-1} + \ldots + \binom{m_1}{2}+\binom{m_1}{1} \\
&= h^{(i)} + \binom{m_i}{i+1}+\binom{m_{i-1}}{i} + \ldots +\binom{m_1}{2}\\
&> \binom{m_i}{i+1}+\binom{m_{i-1}}{i} + \ldots +\binom{m_1}{2}.
\end{align*}

\begin{example} Suppose $I\subseteq K[x_1,\ldots,x_7]$ is a homogeneous ideal containing a regular sequence of quadratic forms $f_1,\ldots, f_7$ and $\Hf_{R/I}(2) = 17$. The possible growth for the Hilbert function in degree $3$ by Macaulay's bound is $\Hf_{R/I}(3)\leq 38$, but Conjecture \ref{orig-EGH} claims that $\Hf_{R/I}(3)\leq 21$.
\end{example}

If the homogeneous ideal $I$ is generated by generic quadrics, it is already known that Conjecture \ref{orig-EGH} is true by Herzog and Popescu \cite{HP} when the characteristic is zero. When $K$ has arbitrary characteristic, this was shown by Gasharov \cite{Gasharov}.

The main motivation behind Conjecture \ref{orig-EGH} about homogeneous ideals containing quadratic regular sequence was another conjecture, known as the Generalized Cayley-Bacharach conjecture, stated in \cite{EGH} in more geometric perspective. 

\begin{conjecture}\label{GCB}[Generalized Cayley-Bacharach Conjecture for quadrics]
Let $\Gamma$ be a complete intersection of $n$ quadrics in $\mathbb{P}^n$.  Any hypersurface $X \subset \mathbb{P}^n$ of degree $d$ containing a subscheme $\Omega\subset \Gamma$ of degree strictly greater than $2^n - 2^{n-d}$ must contain $\Gamma$.
\end{conjecture}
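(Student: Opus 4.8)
\noindent\textbf{Sketch of a proof, assuming Conjecture \ref{orig-EGH}.} The plan is to derive the statement from the Eisenbud--Green--Harris conjecture for quadrics, which is after all the geometric phenomenon it was designed to explain. Write $S=K[x_0,\ldots,x_n]$, let $\mathfrak{c}=(F_1,\ldots,F_n)\subseteq S$ be the saturated ideal of $\Gamma$, so that $S/\mathfrak{c}$ is one-dimensional Cohen--Macaulay of degree $2^n$ whose Artinian reduction has Hilbert function $\binom{n}{i}$, $0\le i\le n$; write $X=V(g)$ with $\deg g=d$. Any $\Omega$ as in the statement is a subscheme of $\Gamma\cap X$ (the scheme defined by $\mathfrak{c}+(g)$), so $\deg\Omega\le\deg(\Gamma\cap X)$, and it suffices to prove that
\[
g\notin\mathfrak{c}\quad\Longrightarrow\quad \deg(\Gamma\cap X)\le 2^n-2^{n-d};
\]
indeed, if $g\in\mathfrak{c}$ then already $\Gamma\subseteq X$. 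The range $d\ge n$ is trivial, since then $2^{n-d}\le 1$ and $\deg\Omega>2^n-2^{n-d}$ forces $\deg\Omega=2^n$, i.e.\ $\Omega=\Gamma$; so assume $1\le d\le n-1$.

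The next step is to pass to an Artinian reduction. Choose a general linear form $\ell$; since $S/\mathfrak{c}$ is Cohen--Macaulay, $\ell$ is a nonzerodivisor on it, and choosing coordinates with $\ell=x_0$ gives $\bar{A}:=K[x_1,\ldots,x_n]/(f_1,\ldots,f_n)$, where $f_1,\ldots,f_n$ is now a full-length regular sequence of quadrics. Because $g\notin\mathfrak{c}$, for general $\ell$ one still has $g\notin\mathfrak{c}+\ell S$ (a dimension count), so the image $\bar{g}\in\bar{A}$ of $g$ is a nonzero form of degree $d$. Comparing Hilbert functions across the hyperplane section $\ell$ gives $\deg(\Gamma\cap X)\le\dim_K\bar{A}/(\bar{g})$, so it remains to bound $\dim_K\bar{A}/(\bar{g})$ from above.

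Since $(\bar{g})$ is generated in degree $d$, we have $\Hf_{\bar{A}/(\bar{g})}(t)=\binom{n}{t}$ for $t<d$ and $\Hf_{\bar{A}/(\bar{g})}(d)=\binom{n}{d}-1$. By the Clements--Lindstr\"om theorem, the function $t\mapsto\binom{n}{t}-\binom{n-d}{t-d}$ is the Hilbert function of the lex-plus-powers ideal $(x_1^2,\ldots,x_n^2,x_1x_2\cdots x_d)$ in $K[x_1,\ldots,x_n]$ and is the largest Hilbert function of a quotient of $K[x_1,\ldots,x_n]/(x_1^2,\ldots,x_n^2)$ that takes the value $\binom{n}{d}-1$ in degree $d$; a routine computation with Macaulay coefficients shows that it attains the EGH growth bound in every degree. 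Hence, applying Conjecture \ref{orig-EGH} to $\bar{A}/(\bar{g})$ inductively from degree $d$ onwards, $\Hf_{\bar{A}/(\bar{g})}(t)\le\binom{n}{t}-\binom{n-d}{t-d}$ for all $t$, and summing over $t$,
\[
\deg(\Gamma\cap X)\ \le\ \dim_K\bar{A}/(\bar{g})\ \le\ \sum_{t\ge 0}\Bigl(\binom{n}{t}-\binom{n-d}{t-d}\Bigr)\ =\ 2^n-2^{n-d},
\]
the last equality because $\sum_{t}\binom{n-d}{t-d}=2^{n-d}$ counts the squarefree monomials of $K[x_1,\ldots,x_n]$ divisible by $x_1\cdots x_d$.

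The main obstacle, apart from the dependence on the still-open Conjecture \ref{orig-EGH}, is making the Artinian reduction rigorous: one must check that $\deg\Omega$---a stable value of $\Hf_{S/I_\Omega}$---is faithfully recorded by a general hyperplane section, that $\bar{g}\ne 0$ there, and that degenerate configurations (for instance $\mathfrak{c}+(g)$ non-saturated, or $g$ divisible by a general linear form) can be disposed of, perhaps by inducting on $n$. That the bound $2^n-2^{n-d}$ cannot be improved is visible from $\Gamma=V\bigl(x_1(x_1-x_0),\ldots,x_n(x_n-x_0)\bigr)$ and $X=V(x_1x_2\cdots x_d)$: among the $2^n$ points $[1:\epsilon_1:\cdots:\epsilon_n]$ of $\Gamma$ (with $\epsilon_i\in\{0,1\}$) exactly the $2^n-2^{n-d}$ having some $\epsilon_i=0$ with $i\le d$ lie on $X$, and $\Gamma\not\subseteq X$.
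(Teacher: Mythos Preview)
The paper does not prove this statement: it is recorded as a conjecture, and the only remark the paper makes is the single sentence ``Conjecture \ref{orig-EGH} implies the Generalized Cayley--Bacharach conjecture for quadrics,'' with no argument supplied. Your proposal is therefore not competing with a proof in the paper; rather, you are filling in precisely the implication the paper asserts but does not justify.

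As a conditional argument (assuming Conjecture \ref{orig-EGH}), your sketch is the standard and correct route. The reduction to bounding $\deg(\Gamma\cap X)$ when $g\notin\mathfrak{c}$ is right; the inequality $\deg(\Gamma\cap X)\le\dim_K\bar A/(\bar g)$ follows from the exact sequence for multiplication by $\ell$ on $S/(\mathfrak{c}+(g))$ and telescoping the Hilbert function (you get equality plus a nonnegative correction $\dim_K(0:_{M}\ell)$); and the inductive use of the EGH growth bound starting from $\Hf_{\bar A/(\bar g)}(d)=\binom{n}{d}-1$ to obtain $\Hf_{\bar A/(\bar g)}(t)\le\binom{n}{t}-\binom{n-d}{t-d}$ is exactly how one extracts Cayley--Bacharach from EGH. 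Your sharpness example is also the classical one.

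Two small points of precision. First, the ideal $(x_1^2,\ldots,x_n^2,x_1x_2\cdots x_d)$ is not literally a lex-plus-powers ideal in $R$ in the sense of the paper's definition (the lex part $J$ would have to contain all degree-$d$ monomials lex-larger than $x_1\cdots x_d$); what is true, and what you need, is that its Hilbert function agrees with that of the genuine lex-plus-powers ideal having a single extra generator in degree $d$, since those larger monomials already lie in $(x_1^2,\ldots,x_n^2)$. Second, the ``obstacles'' you flag in the Artinian reduction are real but routine: genericity of $\ell$ handles $\bar g\ne 0$ and the nonzerodivisor condition on $S/\mathfrak{c}$, and the degree inequality above does not require $\mathfrak{c}+(g)$ to be saturated. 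None of this is a gap in the strategy; it is just bookkeeping that the paper, being a survey, does not attempt.
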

Conjecture \ref{orig-EGH} implies the Generalized Cayley-Bacharach conjecture for quadrics. 

In the same article \cite{EGH}, Eisenbud, Green and Harris dropped the quadratic condition on the regular sequence, and further conjectured the same statement for homogeneous ideals containing regular sequences with any degrees $2\leq a_1 \leq a_2 \ldots \leq a_n$.

\begin{conjecture}[Eisenbud-Green-Harris (EGH) Conjecture, \cite{EGH}] \label{EGH} Let $I$ be a homogeneous ideal in $R = K[x_1,\ldots, x_n]$ containing a regular sequence $f_1,\ldots, f_n$ with degrees $a_1,\ldots, a_n$ such that $2\leq a_1\leq \ldots \leq a_n$. Then there is a lex-plus-powers ideal $L = (x_1^{a_1}, \ldots, x_n^{a_n}) + J$ with a lex ideal  $J$ in $R$ such that
\[\Hf_{R/I}(i) = \Hf_{R/L}(i) \quad \text{for all} \quad i\geq 0.\]
\end{conjecture}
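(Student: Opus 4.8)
The plan is to reduce the conjecture to the theorem of Clements and Lindström. That theorem says precisely that the ring $S=R/(x_1^{a_1},\ldots,x_n^{a_n})$ has the lex property: every homogeneous ideal of $R$ containing the monomial regular sequence $x_1^{a_1},\ldots,x_n^{a_n}$ has the same Hilbert function as a lex-plus-powers ideal associated with $(a_1,\ldots,a_n)$. Hence it is enough to prove the following: given a homogeneous ideal $I$ containing a regular sequence $f_1,\ldots,f_n$ with $\deg f_i=a_i$, there is a homogeneous ideal $I^{\ast}$ \emph{containing the monomial regular sequence} $x_1^{a_1},\ldots,x_n^{a_n}$ with $\Hf_{R/I^{\ast}}=\Hf_{R/I}$; applying Clements--Lindström to $I^{\ast}$ then produces the desired lex-plus-powers ideal. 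Before this one passes, by a routine faithfully flat base change, to the case where $K$ is infinite (indeed algebraically closed), since neither $\Hf_{R/I}$ nor the existence of a lex-plus-powers ideal with a prescribed Hilbert function is affected by such an extension; one may also assume $\mathfrak{c}:=(f_1,\ldots,f_n)\subseteq I\subsetneq R$, the case $I=\mathfrak{c}$ being handled by Remark~\ref{HF_of_ci} together with the observation that $(x_1^{a_1},\ldots,x_n^{a_n})$ is itself a lex-plus-powers ideal (with $J=0$).

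To produce $I^{\ast}$ the natural device is degeneration: choose a one-parameter flat family whose general member is $I$ and whose special fibre $I_0$ is a monomial ideal, so that $\Hf_{R/I_0}=\Hf_{R/I}$, and try to arrange that $I_0$ contains $x_1^{a_1},\ldots,x_n^{a_n}$. The first thing I would test is the generic initial ideal $\mathrm{gin}(I)$ with respect to the degree reverse lexicographic order after a generic change of coordinates: it is Borel-fixed, shares the Hilbert function of $R/I$, and contains $\mathrm{gin}(\mathfrak{c})$. The crux is then to show that $\mathrm{gin}(I)$ — or some other monomial degeneration of $I$ — actually contains $(x_1^{a_1},\ldots,x_n^{a_n})$; equivalently, that the regular sequence can be carried to the monomial complete intersection through a flat degeneration that simultaneously keeps $I$ flat.

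This last step is exactly where the difficulty concentrates, and it is why the conjecture has stayed open: the generic initial ideal of an ideal containing a regular sequence of degrees $(a_1,\ldots,a_n)$ need not contain $(x_1^{a_1},\ldots,x_n^{a_n})$ — already $\mathrm{gin}(\mathfrak{c})$ fails to be the monomial complete intersection in general — so the most natural degeneration destroys precisely the structure one wants to preserve, and no global construction replacing it is currently known. Failing a global argument, two inductive routes are worth pursuing. The first is linkage: by Proposition~\ref{dual}, $\Hf_{R/I}(j)=\Hf_{R/\mathfrak{c}}(j)-\Hf_{R/(\mathfrak{c}:I)}(s-j)$, and $\mathfrak{c}:I$ again contains $f_1,\ldots,f_n$, so one tries an induction deducing the conjecture for $I$ from the conjecture for $\mathfrak{c}:I$; the missing ingredient is the purely combinatorial assertion that the ideal linked to a lex-plus-powers ideal inside $(x_1^{a_1},\ldots,x_n^{a_n})$ again has the Hilbert function of a lex-plus-powers ideal. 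The second is to cut by a general linear form $\ell$ and induct on $n$: in $R/\ell$ the images $f_1,\ldots,f_{n-1}$ form a regular sequence of the same degrees while $f_n$ survives as an extra form, and closing the loop would require both the non-full-length version of the conjecture and a Green-type hyperplane-restriction theorem valid over $R/(x_1^{a_1},\ldots,x_n^{a_n})$ — each of which is itself open.

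In summary, the main obstacle is the absence of a degeneration (or of a Green-type restriction) compatible with the power ideal $(x_1^{a_1},\ldots,x_n^{a_n})$: all known partial results secure such compatibility only after imposing an extra hypothesis — small $n$, rapidly growing exponents $a_i$, $I$ a complete intersection or Gorenstein, generic forms $f_i$, and so on — and a full proof would need to dispense with every such crutch at once.
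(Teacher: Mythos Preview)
The statement you were asked to prove is labelled a \emph{Conjecture} in the paper, and the paper does not prove it; it is a survey whose purpose is precisely to record that the EGH conjecture remains open and to catalogue the partial results. So there is no ``paper's own proof'' against which to compare your attempt.

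Your write-up is not a proof and, to your credit, does not pretend to be one: it is an honest discussion of the natural strategies (Clements--Lindstr\"om reduction via a monomial degeneration, linkage, hyperplane section) together with a clear identification of where each breaks down. The central obstruction you isolate --- that no known flat degeneration simultaneously preserves the Hilbert function of $I$ and forces the limit to contain $(x_1^{a_1},\ldots,x_n^{a_n})$ --- is exactly the difficulty the literature grapples with, and your remark that $\mathrm{gin}(\mathfrak{c})$ is already not the monomial complete intersection is the right concrete witness. The linkage and hyperplane-restriction ideas you sketch are indeed the mechanisms behind several of the partial results surveyed in the paper (Lemma~\ref{lemma}, the Caviglia--Maclagan and Caviglia--De~Stefani theorems, Chong's licci result), but as you note they only close under additional hypotheses.

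In short: there is no gap to name because you have not claimed a proof; your assessment that the problem is open, and your diagnosis of why, are accurate and consonant with the paper.
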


From now on, we will refer to this conjecture as the EGH conjecture. We will also use $\EGH_{(a_1,\ldots, a_n), n}$ to emphasize the degrees of the regular sequence and also that it is a full length-$n$ regular sequence in $R=K[x_1,\ldots, x_n]$. 

Notice that in Remark \ref{HF_of_ci}, we observed a very trivial version of this statement when $I =(f_1,\ldots, f_n)$ and $L=(x_1^{a_1},\ldots, x_n^{a_n})$.

Another statement of the Generalized Cayley-Bacharach conjecture that does not require quadrics was given in \cite[Conjecture CB12]{EGH2}.  In 2013, Geramita and Kreuzer reformulated this version of Generalized Cayley-Bacharach conjecture for arbitrary degrees by dividing it into intervals. They also strengthened the Conjecture CB12 in \cite[Conjecture 3.5]{GK}. In $\mathbb{P}^3$, they provided a proof for it. In $\mathbb{P}^n$, they confirmed \cite[Conjecture 3.5]{GK} for some intervals. The EGH conjecture which is the concern of this paper is stronger than \cite[Conjecture CB12]{EGH2} as well. 

One of the variations of the EGH conjecture in the literature is when one allows to have a regular sequence that is not of full length. 

\begin{conjecture}[$\EGH_{n, (a_1,\ldots, a_r), r}$] \label{non-max_regseq} Let $I$ be a homogeneous ideal in $R=K[x_1,\ldots, x_n]$ containing a regular sequence of length $r<n$ with degrees $a_1,\ldots, a_r$ such that $2\leq a_1\leq \ldots \leq a_r$. Then there is a lex-plus-powers ideal $L = (x_1^{a_1}, \ldots, x_r^{a_r}) + J^{\text{lex}}$ in $R$ with the same Hilbert function as $I$.
\end{conjecture}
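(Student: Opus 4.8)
\medskip

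The plan is to reduce Conjecture~\ref{non-max_regseq} to the full-length Conjecture~\ref{EGH}: I would try to show that $\EGH_{(b_1,\ldots,b_n),n}$, for all admissible $b_1\le\cdots\le b_n$, implies $\EGH_{n,(a_1,\ldots,a_r),r}$ for every $n>r$. The idea is to enlarge the given length-$r$ regular sequence, after adjoining new forms to $I$, into a \emph{full}-length regular sequence whose extra $n-r$ degrees are so large that they contribute nothing to the Hilbert function in any prescribed range of degrees. Reductions of this flavor are in the spirit of the discussion in \cite{FR}.

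\textbf{Steps 1--2 (pad the regular sequence, then apply the full conjecture).} Let $I\supseteq(f_1,\ldots,f_r)$ with $\deg f_i=a_i$ and $r<n$. Since $R/(f_1,\ldots,f_r)$ is a complete intersection of dimension $n-r\ge 1$, a general form of any positive degree avoids all of its (minimal, hence associated) primes; iterating this $n-r$ times shows that for any integer $N$ there are forms $g_{r+1},\ldots,g_n$ of degree $N$ making $f_1,\ldots,f_r,g_{r+1},\ldots,g_n$ a regular sequence of length $n$. Fix a large $N$ and set $I^{(N)}:=I+(g_{r+1},\ldots,g_n)$, which contains a full-length regular sequence of degrees $(a_1,\ldots,a_r,N,\ldots,N)$. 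Assuming $\EGH_{(a_1,\ldots,a_r,N,\ldots,N),n}$, there is a lex ideal $J^{(N)}$ with
\[
\Hf_{R/I^{(N)}}=\Hf_{R/L^{(N)}},\qquad L^{(N)}:=(x_1^{a_1},\ldots,x_r^{a_r},x_{r+1}^{N},\ldots,x_n^{N})+J^{(N)} .
\]
Because $\deg g_i=N$, we have $I^{(N)}_j=I_j$ and hence $\Hf_{R/I^{(N)}}(j)=\Hf_{R/I}(j)$ for every $j<N$; and since $x_{r+1}^{N},\ldots,x_n^{N}$ and their multiples lie in degrees $\ge N$, also $L^{(N)}_j=\bigl((x_1^{a_1},\ldots,x_r^{a_r})+J^{(N)}\bigr)_j$ for $j<N$. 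Thus the lex-plus-powers ideal $L:=(x_1^{a_1},\ldots,x_r^{a_r})+J^{(N)}$ associated with the degrees $(a_1,\ldots,a_r)$ satisfies $\Hf_{R/L}(j)=\Hf_{R/I}(j)$ for all $j<N$.

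\textbf{Step 3 (pass to the limit).} It remains to upgrade ``for all $j<N$'' to ``for all $j$''. Here I would use that, by Clements--Lindstr\"om \cite{CL} (taking $a_{r+1},\ldots,a_n$ as large as one likes), every Hilbert function of a homogeneous ideal of $R$ containing $x_1^{a_1},\ldots,x_r^{a_r}$ is realized by a lex-plus-powers ideal for the data $(a_1,\ldots,a_r)$; so it suffices to certify that the eventually-polynomial function $\Hf_{R/I}$ is one of these Hilbert functions. Letting $N\to\infty$ in Steps~1--2 and invoking the uniqueness part of Macaulay's theorem \cite{Ma} (lex ideals are determined by their Hilbert function), the ideals $J^{(N)}$ should stabilize in each fixed degree to a single lex ideal $J$, and then $L=(x_1^{a_1},\ldots,x_r^{a_r})+J$ has $\Hf_{R/L}=\Hf_{R/I}$ in all degrees, as required.

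\textbf{Where the difficulty sits.} Step~2 is literally an instance of the open Conjecture~\ref{EGH}, so this argument is only a reduction --- it shows that \ref{non-max_regseq} is no harder than \ref{EGH}, and it becomes unconditional exactly for those $(a_1,\ldots,a_r)$ that embed into a sequence for which \ref{EGH} is already known. Granting \ref{EGH}, the remaining subtle point is Step~3: one must guarantee that the limiting lex ideal is finitely generated and, crucially, that the agreement of Hilbert functions --- known only in the window $j<N$ for each $N$ --- genuinely persists into \emph{all} high degrees. Pinning this down is essentially a Gotzmann-type persistence statement \cite{Gotzmann} tailored to lex-plus-powers ideals, and it is there, rather than in the routine bookkeeping of Steps~1--2, that the real work of an honest proof would lie.
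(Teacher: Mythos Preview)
The statement you are attempting is labelled a \emph{Conjecture} in the paper, and the paper gives no proof of it; what the paper does instead (Remark~\ref{Rmk3.5}) is record that Caviglia--Maclagan \cite{CM} reduce Conjecture~\ref{non-max_regseq} to the full-length Conjecture~\ref{EGH}. Your proposal is exactly such a reduction, and you say so yourself (``this argument is only a reduction''), so there is no disagreement in substance: neither you nor the paper claims an unconditional proof.

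Your padding-by-high-degree-forms argument is essentially \cite[Proposition~9]{CM}, which the paper cites. Two comments on the details. First, your Step~1 tacitly uses that a general form of a given degree is a nonzerodivisor; over a finite field this can fail for a prescribed degree. The clean fix is to extend $f_1,\ldots,f_r$ to \emph{some} maximal homogeneous regular sequence $f_1,\ldots,f_r,h_{r+1},\ldots,h_n$ (homogeneous prime avoidance) and then take $g_i=h_i^{m}$ with $m$ large, since $h$ is a nonzerodivisor iff $h^{m}$ is. Second, your worry in Step~3 is overstated: no Gotzmann-type persistence is needed. For each fixed degree $j$ and any $N>j$, the graded piece $L^{(N)}_j$ is the unique lex-plus-powers segment (with respect to $(a_1,\ldots,a_r)$ only) of the correct dimension $\dim_K R_j-\Hf_{R/I}(j)$; hence $L^{(N)}_j$ is literally independent of $N$ once $N>j$, the union $L:=\bigoplus_j L^{(N)}_j$ (any $N>j$) is a bona fide homogeneous ideal by construction, and finite generation is automatic since $R$ is Noetherian. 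So Steps~1--3, once patched, give a correct reduction $\EGH_{(a_1,\ldots,a_r,N,\ldots,N),n}\ (\text{all large }N)\ \Rightarrow\ \EGH_{n,(a_1,\ldots,a_r),r}$, matching what the paper attributes to \cite{CM}; the conjecture itself remains open.
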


\begin{remark}\label{Rmk3.5} The equivalence between Conjectures \ref{EGH} and \ref{non-max_regseq} was discussed by Caviglia and Maclagan \cite{CM}.  For $2\leq a_1 \leq \ldots \leq a_r$ fixed, they showed that  if $\EGH_{(a_1',\ldots, a_n'), n}$ holds for all $2\leq a_1'\leq \ldots \leq a_n'$ where $a_i' = a_i$ for $i=1,\ldots, r$ then $\EGH_{n, (a_1,\ldots, a_r), r}$ holds (see \cite[Propositions 9]{CM}). They also showed that if $\EGH_{(a_1,\ldots, a_r), r}$ holds then $\EGH_{n, (a_1,\ldots, a_r), r}$ holds for all $n\geq r$ (see \cite[Propositions 10]{CM}).
\end{remark}
%
%
%
%
%
%
%
%
%
%
%
\section{Results on the EGH conjecture}

Richert  \cite{Ri} proved that the EGH conjecture is true for $R=K[x, y]$.  Thus, for any homogeneous ideal $I$ in two variables containing a regular sequence $f_1, f_2$ with degrees $2\leq a_1\leq a_2$,  we have  a lex plus powers ideal $L=(x^{a_1}, y^{a_2})+J$ such that 
\[\dim_K I_i = \dim_K L_i \quad \text{ for all } \quad i\geq 0.\]

For $K[x_1,\ldots, x_n]$ with  $n>2$, we put together the known results on EGH  depending on the approaches were used.

\medskip

\subsection{EGH depending on the degrees $(a_1,\ldots, a_n)$.}
Let $n\geq 2$. For a fixed degree $d\geq 1$, when the Hilbert function of $R/I$ at degree $d$ is known,
the EGH conjecture proposes a maximal growth for degree $d+1$.  One of the adopted approaches in the literature focuses on the growth at certain degree.  Hence, the following definition states a partial version of EGH conjecture for consecutive degrees.

\begin{definition}[$\EGH_{(a_1,\ldots, a_n), n}(d)$] \label{weakerEGH} For any homogeneous ideal $I$ in $R$ containing a regular sequence $f_1,\ldots, f_n$ of degrees $2\leq a_1\leq \ldots \leq a_n$ respectively, if there exists a lex-plus-powers ideal $L$ associated with degrees $a_1,\ldots, a_n$ such that 
\[\Hf_{R/I}(d) = \Hf_{R/L}(d) \quad \text{ and } \quad \Hf_{R/I}(d+1) = \Hf_{R/L}(d+1),\]
 then we say that $\EGH_{(a_1,\ldots, a_n), n}(d)$ holds.
\end{definition}

The following proposition is given by Francisco \cite{Fr} for the almost complete intersection ideals.

\begin{proposition}\label{Fr-prop} Let $I = (f_1,\ldots, f_n, g)$ be a homogeneous ideal where $f_1, \ldots, f_n$  is a regular sequence with degrees $a_1,\ldots, a_n$ and $\deg g =d\geq a_1$. Then $\EGH_{(a_1,\ldots, a_n), n}(d)$ is true for $I$.
\end{proposition}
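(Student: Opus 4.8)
The plan is to compute $\Hf_{R/I}$ in the two relevant degrees, to observe that fitting these values by a lex‑plus‑powers ideal amounts to a single numerical inequality on the invariant $\delta:=\dim_K(\mathfrak c:g)_1$, and then to prove that inequality by an argument inside the Artinian Gorenstein ring $R/\mathfrak c$ together with a greedy construction of a regular sequence of small degrees. Write $\mathfrak c:=(f_1,\ldots,f_n)$ and $S:=R/(x_1^{a_1},\ldots,x_n^{a_n})$, so $\Hf_S=\Hf_{R/\mathfrak c}$ by Remark \ref{HF_of_ci}. If $g\in\mathfrak c$ then $I=\mathfrak c$ and $L=(x_1^{a_1},\ldots,x_n^{a_n})$ works, so assume $g\notin\mathfrak c$, whence $I_j=\mathfrak c_j$ for $j<d$. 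Chasing the exact sequence
\[
0\longrightarrow\big((\mathfrak c:g)/\mathfrak c\big)(-d)\longrightarrow (R/\mathfrak c)(-d)\xrightarrow{\ \cdot g\ }R/\mathfrak c\longrightarrow R/I\longrightarrow 0
\]
in degrees $d$ and $d+1$ (using $\mathfrak c_1=0$) gives $\Hf_{R/I}(d)=\Hf_{R/\mathfrak c}(d)-1$ and $\Hf_{R/I}(d+1)=\Hf_{R/\mathfrak c}(d+1)-n+\delta$, with $\delta=\dim_K(\mathfrak c:g)_1=\dim_K(\mathfrak c_{d+1}\cap R_1g)$.

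Next I would build the lex‑plus‑powers ideal directly. Let $\mu$ be the lex‑largest monomial of degree $d$ not in $(x_1^{a_1},\ldots,x_n^{a_n})$ — it exists since $\Hf_{R/\mathfrak c}(d)=\Hf_{R/I}(d)+1\ge 1$ — and let $\gamma:=\dim_K(S_1\overline\mu)$ be the size of its shadow in $S$; a routine count of the degree‑$(d+1)$ multiples of $\mu$ that avoid all $x_i^{a_i}$ expresses $\gamma$ through the Macaulay data of $d$ relative to $(a_1,\ldots,a_n)$ and shows $S_1\overline\mu$ is itself a lex subspace. Take $\overline J\subseteq S$ with $\overline J_j=0$ for $j<d$, $\overline J_d=\langle\overline\mu\rangle$, $\overline J_{d+1}$ the lex subspace of dimension $\dim_K S_{d+1}-\Hf_{R/I}(d+1)=n-\delta$, and $\overline J_j$ for $j>d+1$ the successive shadows; let $J\subseteq R$ be its preimage. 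Then $\overline J$ is a genuine lex ideal and $L:=(x_1^{a_1},\ldots,x_n^{a_n})+J$ satisfies $\Hf_{R/L}(d)=\Hf_{R/I}(d)$ and $\Hf_{R/L}(d+1)=\Hf_{R/I}(d+1)$, provided $\gamma\le n-\delta\le\dim_K S_{d+1}$. The right inequality merely restates $\Hf_{R/I}(d+1)\ge 0$, so the proposition reduces to
\[
\delta\le n-\gamma,\qquad\text{equivalently}\qquad\sum_{i=1}^{\delta}(a_i-1)\le d,
\]
the equivalence being a short calculation with Macaulay representations.

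To prove $\sum_{i=1}^{\delta}(a_i-1)\le d$ — which involves only $d$, the $a_i$, and $\delta$, hence is unaffected by a linear change of variables — pick linearly independent $\ell_1,\ldots,\ell_\delta\in(\mathfrak c:g)_1$ and change coordinates so that $\ell_i=x_i$; set $\mathfrak p:=(x_1,\ldots,x_\delta)$. In the Artinian Gorenstein complete intersection $A:=R/\mathfrak c$, whose socle sits in degree $s:=\sum_{i=1}^n(a_i-1)$ by Remark \ref{HF_of_ci}, the nonzero element $\overline g$ is killed by $x_1,\ldots,x_\delta$; since every nonzero ideal of an Artinian Gorenstein ring meets the one‑dimensional socle, there is a homogeneous $h$ of degree $s-d$ with $0\ne\overline g\,\overline h\in A_s$, and $h\notin\mathfrak pA$ (as $\overline g$ annihilates $\mathfrak pA$). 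Hence $R/(\mathfrak c+\mathfrak p)$ is nonzero in degree $s-d$. On the other hand $R/(\mathfrak c+\mathfrak p)$ is an Artinian quotient of the polynomial ring $B:=R/\mathfrak p$ in $n-\delta$ variables, and I claim the image of $\mathfrak c$ in $B$ contains a regular sequence $q_1,\ldots,q_{n-\delta}$ with $\deg q_t\le a_{\delta+t}$: build it greedily, noting that at step $t$ the minimal primes of $(q_1,\ldots,q_{t-1})$ have height $t-1<n-\delta$, while $(\overline f_1,\ldots,\overline f_{\delta+t})$ has height $\ge t$ in $B$ (because $f_1,\ldots,f_{\delta+t}$ is a regular sequence in $R$), so prime avoidance supplies a degree‑$a_{\delta+t}$ element of that ideal lying in none of those primes. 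Consequently the top nonzero degree of $R/(\mathfrak c+\mathfrak p)$ is at most $\sum_{t=1}^{n-\delta}(\deg q_t-1)\le\sum_{i=\delta+1}^{n}(a_i-1)=s-\sum_{i=1}^{\delta}(a_i-1)$, and comparing with $s-d$ yields the claim, completing the proof.

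The bookkeeping in the first two paragraphs — the two Hilbert‑function identities, the verification that $\overline J$ is lex, and the Macaulay‑representation manipulation — is routine. The crux, and where I expect the real work to lie, is the estimate $\sum_{i=1}^{\delta}(a_i-1)\le d$, and within it the greedy construction of a regular sequence of minimal possible degrees inside the image of $\mathfrak c$ in $B$: this is precisely where the hypothesis that $f_1,\ldots,f_n$ is a regular sequence enters, and it is the point at which some care is needed (e.g.\ passing to an infinite ground field, which is harmless for Hilbert functions, so that the prime‑avoidance step is available).
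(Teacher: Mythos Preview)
The paper is a survey and does not supply its own proof of this proposition; it simply attributes the result to Francisco \cite{Fr}. So there is no in-paper argument to compare against, and the relevant question is whether your proof stands on its own.

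It does. Your reduction is exactly the right one: from the short exact sequence you correctly extract $\Hf_{R/I}(d)=\Hf_{R/\mathfrak c}(d)-1$ and $\Hf_{R/I}(d+1)=\Hf_{R/\mathfrak c}(d+1)-n+\delta$, and the identification of the lex-largest monomial $\mu$ together with the count $\gamma=n-k$, where $k=\max\{j:\sum_{i\le j}(a_i-1)\le d\}$, makes the equivalence $\gamma\le n-\delta\iff\sum_{i=1}^{\delta}(a_i-1)\le d$ immediate. The heart of the matter is that last inequality, and your Gorenstein-duality argument is clean: the nonzero class $\overline g$ pairs with some $\overline h$ in degree $s-d$ into the socle, $\overline h$ cannot lie in $\mathfrak pA$ since $\overline g$ kills that ideal, hence $B/\overline{\mathfrak c}$ is nonzero in degree $s-d$; on the other hand the height bound $\operatorname{ht}_B(\overline f_1,\ldots,\overline f_{\delta+t})\ge t$ (which follows from catenarity of $R$ and $\operatorname{ht}_R(f_1,\ldots,f_{\delta+t})=\delta+t$) lets the greedy prime-avoidance step go through over an infinite field and bounds the socle degree of $B/\overline{\mathfrak c}$ by $\sum_{i>\delta}(a_i-1)$. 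Two small points worth tightening when you write it up: (i) in the prime-avoidance step you are choosing a form of the specific degree $a_{\delta+t}$, so you should note that for each minimal prime $P_j$ some $\overline f_i$ with $i\le\delta+t$ lies outside $P_j$, and then $\overline f_i\cdot m$ with $m$ a monomial of degree $a_{\delta+t}-a_i$ outside $P_j$ witnesses that the degree-$a_{\delta+t}$ piece of $(\overline f_1,\ldots,\overline f_{\delta+t})$ is not contained in $P_j$; (ii) the claim that $S_1\overline\mu$ is itself a lex segment in $S_{d+1}$ is true for the lex-largest $\mu$ and is what guarantees your $\overline J$ is a genuine lex ideal, but it deserves a one-line justification rather than being asserted. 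Neither point is a gap; both are routine. Your approach is self-contained and in the same spirit as the literature: Francisco's original argument likewise hinges on controlling $\dim_K(\mathfrak c:g)_1$ and comparing with the lex-plus-powers model, so you have essentially reconstructed the intended proof.
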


In \cite{CDS2}, for an almost complete intersection $I = (f_1,f_2, f_3, g) \subset K[x_1,\ldots, x_n]$ where $f_1, f_2, f_3$ is a regular sequence of length three with $\deg f_i = a_i$, $i=1,2,3$ and $\deg g = d \leq a_1+a_2+a_3-3$,  Caviglia-De Stefani showed $\Hf_{R/I}(i) \leq \Hf_{R/L}(i)$ for all $i\geq 0$ where $L = (x_1^{a_1}, x_2^{a_2}, x_3^{a_3}, \mathrm{m})$ with $\mathrm{m}$ is the largest monomial of degree $d$ with respect to lexicographic order that is not in $(x_1^{a_1}, x_2^{a_2}, x_3^{a_3})$. Their work on such almost complete intersections also recovered the result of \cite{GK} for $\mathbb{P}^3$.

To show that $\EGH_{(d,\ldots, d), n}(d)$ holds  it suffices to show that the statement in Definition \ref{weakerEGH} holds for the homogeneous ideals $I = (f_1,\ldots, f_n, g_1,\ldots, g_m)\subseteq K[x_1,\ldots, x_n]$ generated by degree $d$ forms $f_1,\ldots, f_n, g_1,\ldots, g_m$ where $f_1,\ldots, f_n$ form a regular sequence (see \cite[Lemma 2.6]{GH}.)  In other words, it is enough to show the statement for the ideals where not only the regular sequence $f_1,\ldots, f_n$ in the generators have degree $d$, but also rest of the generators $g_1,\ldots, g_m$ have degree $d$ too. 
Thus, focusing on the case $d=2$ we have the following remark.

\begin{remark}\label{rmk-quadrics} In order to show that $\EGH_{(2,\ldots, 2), n}(2)$ is true, it suffices to study the ideals generated by only quadrics containing a maximal regular sequence. 
\end{remark}

In \cite{CM}, Caviglia-Maclagan provided the following lemma about this weaker version of  the EGH. Due to its importance as a tool for studying the EGH conjecture, we would like to present its proof given in \cite[Lemma 12]{CM}. 

\begin{lemma}\label{lemma} Given $2\leq a_1 \leq \ldots \leq a_n$, set $s= \sum\limits_{i=1}^n (a_i-1)$. Let $d\geq 1$. Then 
$$ \mbox{
$\EGH_{(a_1, \ldots, a_n), n}(d)$ holds if and only if $\EGH_{(a_1, \ldots, a_n), n}(s-d-1)$ holds.}$$
Furthermore,
$$\mbox{
if $\EGH_{(a_1, \ldots, a_n), n}(d)$ holds for all $0\leq d \leq \floor{\frac{s-1}{2}}$ then  $\EGH_{(a_1, \ldots, a_n), n}$ holds.}$$
\end{lemma}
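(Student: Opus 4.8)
\emph{Overall strategy.} The plan is to derive both assertions of Lemma~\ref{lemma} from liaison duality (Proposition~\ref{dual}), together with two structural facts about linkage through a complete intersection. Throughout I write $\mathfrak{q}=(x_1^{a_1},\dots,x_n^{a_n})$ and abbreviate $\EGH_{(a_1,\dots,a_n),n}(d)$ by $\EGH(d)$. The two inputs I would isolate as preliminary lemmas are: (F1) if $\mathfrak{c}\subseteq I$ is a complete intersection of full length $n$ in $R$, then $\mathfrak{c}:(\mathfrak{c}:I)=I$ --- this is the double-annihilator property in the Artinian Gorenstein ring $R/\mathfrak{c}$; and (F2) if $L=\mathfrak{q}+J$ is a lex-plus-powers ideal associated to $(a_1,\dots,a_n)$, then $\mathfrak{q}:L$ is again such an ideal.

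\emph{The equivalence $\EGH(d)\Leftrightarrow\EGH(s-d-1)$.} Assume $\EGH(d)$ and let $I'\supseteq\mathfrak{c}'$ be arbitrary, with $\mathfrak{c}'$ a complete intersection of type $(a_1,\dots,a_n)$. Put $I:=\mathfrak{c}':I'$; then $\mathfrak{c}'\subseteq I$, so $I$ contains a regular sequence of degrees $(a_1,\dots,a_n)$, and $\EGH(d)$ yields a lex-plus-powers ideal $L$ with $\Hf_{R/I}=\Hf_{R/L}$ in degrees $d$ and $d+1$. Set $L':=\mathfrak{q}:L$, which is lex-plus-powers by (F2); note $\mathfrak{q}\subseteq L'$, and $\mathfrak{q}:L'=\mathfrak{q}:(\mathfrak{q}:L)=L$ by (F1). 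Applying Proposition~\ref{dual} to $(I',\mathfrak{c}')$ and to $(L',\mathfrak{q})$ gives $\Hf_{R/I'}(j)=\Hf_{R/\mathfrak{c}'}(j)-\Hf_{R/I}(s-j)$ and $\Hf_{R/L'}(j)=\Hf_{R/\mathfrak{q}}(j)-\Hf_{R/L}(s-j)$; since $\Hf_{R/\mathfrak{c}'}=\Hf_{R/\mathfrak{q}}$ by Remark~\ref{HF_of_ci}, subtracting yields $\Hf_{R/I'}(j)-\Hf_{R/L'}(j)=\Hf_{R/L}(s-j)-\Hf_{R/I}(s-j)$, which vanishes whenever $s-j\in\{d,d+1\}$, i.e.\ for $j\in\{s-d-1,\,s-d\}$. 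Hence $L'$ witnesses $\EGH(s-d-1)$ for $I'$; as $I'$ was arbitrary, $\EGH(s-d-1)$ holds. Since $s-(s-d-1)-1=d$, the reverse implication is the same argument.

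\emph{Deducing the full conjecture.} If $\EGH(d)$ holds for all $0\le d\le\floor{\frac{s-1}{2}}$, the equivalence gives it also for $d=s-1-d'$ with $0\le d'\le\floor{\frac{s-1}{2}}$; these two ranges cover $0\le d\le s-1$. Now fix $I$ containing a complete intersection $\mathfrak{c}$ of type $(a_1,\dots,a_n)$ and set $h=\Hf_{R/I}$. Then $h(0)=1$, and for each $d\ge 0$ the pair $(h(d),h(d+1))$ obeys the Clements--Lindstr\"om growth bound: for $0\le d\le s-1$ because $\EGH(d)$ applied to $I$ produces a lex-plus-powers ideal realizing those two Hilbert values in degrees $d$ and $d+1$, and for $d\ge s$ because $h(d+1)\le\Hf_{R/\mathfrak{c}}(d+1)=0$ as $d+1>s$. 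By the Clements--Lindstr\"om theorem (recalled in~\textsection 3), a numerical function obeying this bound in every degree is the Hilbert function of $R/L$ for a lex-plus-powers ideal $L$ associated to $(a_1,\dots,a_n)$; applying this to $h$ produces the $L$ required by $\EGH_{(a_1,\dots,a_n),n}$.

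\emph{Where the difficulty lies.} Step (F1) is standard Gorenstein duality, and the last paragraph is a soft consequence of Clements--Lindstr\"om once one notices that the growth bound constrains only consecutive degrees, so that passing from $\EGH(d)$ ``for all $d$'' to the full conjecture costs nothing new. The main obstacle is (F2). I would prove it using the perfect pairing $(R/\mathfrak{q})_j\times(R/\mathfrak{q})_{s-j}\to(R/\mathfrak{q})_s\cong K$ on standard monomials, under which $u\mapsto (x_1^{a_1-1}\cdots x_n^{a_n-1})/u$ is a lexicographic-order-reversing bijection; this forces the annihilator of a lex segment of standard monomials again to be a lex segment, so that $\mathfrak{q}:L$ is lex-plus-powers. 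The care needed there is in normalizing $L$ so that its lex part is minimally generated by standard monomials and in tracking Hilbert values across the pairing, but no genuinely new idea should be required.
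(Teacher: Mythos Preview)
Your argument is correct and follows the same liaison-duality strategy as the paper. The one difference worth flagging is that you isolate (F2) as ``the main obstacle,'' whereas the paper sidesteps it entirely: rather than proving that $\mathfrak{q}:L$ is itself lex-plus-powers, the paper just observes that this colon ideal contains $\mathfrak{q}$ and invokes Clements--Lindstr\"om to replace it by a lex-plus-powers ideal with the same Hilbert function, which is all that is needed. So (F2), while true and provable along the lines you sketch, is more than you need here. Conversely, the paper leaves the ``Furthermore'' clause unproved; your deduction of it from the Clements--Lindstr\"om growth characterization is exactly right.
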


\begin{proof} Suppose that $\EGH_{(a_1, \ldots, a_n), n}(d)$ holds. Then given any homogeneous ideal containing a regular sequence with degrees $a_1,\ldots, a_n$, there is lex-plus-powers ideal associated with degrees $a_1,\ldots, a_n$ such that Hilbert functions of both ideals agree at degrees $d$ and $d+1$.
Let $I$ be a homogeneous ideal in $R$ containing a regular sequence $f_1,\ldots, f_n$ with $\deg f_i=a_i$, for $i=1,\ldots, n$.  Then by Proposition \ref{dual} we get
\begin{align*} 
\Hf_{R/I}(j) = \Hf_{R/(f_1,\ldots,f_n)}(j) - \Hf_{R/((f_1,\ldots,f_n) : I)}(s-j).
\end{align*}

Since the colon ideal $((f_1,\ldots,f_n) : I)$ contains the regular sequence $f_1,\ldots, f_n$, then by assumption there is a lex-plus-powers ideal $L' = (x_1^{a_1},\ldots, x_n^{a_n}) + J'$ such that 
\begin{align}\label{eqn2}
\Hf_{R/((f_1,\ldots,f_n): I)}(d) =\Hf_{R/L'}(d)      \quad \text{ and } \quad \Hf_{R/((f_1,\ldots,f_n) : I)}(d+1) = \Hf_{R/L'}(d+1).  
\end{align}
On the other hand, we also have 
\begin{align}\label{eqn3}
\Hf_{R/L'}(j) = \Hf_{R/(x_1^{a_1},\ldots,x_n^{a_n})}(j) - \Hf_{R/((x_1^{a_1},\ldots,x_n^{a_n}) : L')}(s-j).
\end{align}

Thus, for $j=s-d-1$ and $j=s-d$, 
\begin{align*}
\Hf_{R/I}(j) &= \Hf_{R/(f_1,\ldots,f_n)}(j) - \Hf_{R/((f_1,\ldots,f_n): I)}(s-j)  \\     &=  \Hf_{R/(f_1,\ldots,f_n)}(j) - \Hf_{R/(x_1^{a_1},\ldots,x_n^{a_n})}(s-j) + \Hf_{R/((x_1^{a_1},\ldots,x_n^{a_n}) : L')}(j) \\
    &=  \Hf_{R/((x_1^{a_1},\ldots,x_n^{a_n}) : L')}(j).
\end{align*}
The second equality follows from \eqref{eqn2} and \eqref{eqn3}. Then the last equality is by Remark \ref{HF_of_ci} and Proposition \ref{dual}.

Finally, since the ideal $((x_1^{a_1},\ldots,x_n^{a_n}) : L')$ contains the regular sequence $x_1^{a_1},\ldots,x_n^{a_n}$, by Clement-Lindstr\"om result mentioned previously, there exists a lex-plus-powers ideal $L$ associated with degrees $a_1,\ldots, a_n$ such that $\Hf_{R/((x_1^{a_1},\ldots,x_n^{a_n}) : L')}(i) = \Hf_{R/L}(i)$ for all $ i\geq 0$.

Hence, 
$$\Hf_{R/I}(s-d-1) = \Hf_{R/L}(s-d-1) \quad \text{ and } \quad \Hf_{R/I}(s-d) = \Hf_{R/L}(s-d).$$

\end{proof}

\begin{remark}\label{EGH(0)}  We know $\Hf_{R/I}(0)=1$.  If $\Hf_{R/I}(1)=n$, then any lex-plus-powers ideal $L = (x_1^{a_1},\ldots, x_n^{a_n}) +J$ where the lex ideal $J$ does not contain any linear form  has $\Hf_{R/L}(0)=1$ and $\Hf_{R/L}(1)=n$. If $\Hf_{R/I}(1)=r<n$, that is $I$ has $n-r$ linear generators, then it is enough to pick the lex ideal $J$ containing $x_1,\ldots, x_{n-r}$, then $\Hf_{R/L}(1)=r$ as well. Therefore, we see that $\EGH_{(a_1,a_2,\ldots, a_n), n}(0)$ is always true.
\end{remark}
%
%
%
%

\begin{theorem}[\cite{CM,CDS}] For  $2\leq a_1\leq \ldots \leq a_n$  such that $a_j \geq \sum\limits_{i=1}^{j-1}(a_i-1)$, the $\EGH_{(a_1,\ldots,a_n), n}$ is true. 
\end{theorem}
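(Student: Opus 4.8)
The plan is to proceed by induction on $n$, reducing the full-length statement $\EGH_{(a_1,\ldots,a_n),n}$ to the non-maximal-length version $\EGH_{n,(a_1,\ldots,a_{n-1}),n-1}$ together with a hyperplane-type argument that exploits the hypothesis $a_n \ge \sum_{i=1}^{n-1}(a_i-1)$. The point of this inequality is that it forces $a_n$ to be so large that the power $x_n^{a_n}$ sits above the socle degree $s' = \sum_{i=1}^{n-1}(a_i-1)$ of the truncated complete intersection $R'/(x_1^{a_1},\ldots,x_{n-1}^{a_{n-1}})$, where $R' = K[x_1,\ldots,x_{n-1}]$. Consequently, in the relevant range of degrees the extra power generator $x_n^{a_n}$ in a lex-plus-powers ideal contributes nothing new beyond what $x_n^{a_n}$ already does, and the combinatorial obstruction to building the lex-plus-powers ideal collapses to a problem in one fewer variable.

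The key steps, in order, are as follows. First I would set up the base case: for $n=1$ (or $n=2$, using Richert's theorem \cite{Ri} quoted above) the statement is known. Second, assuming the result for $n-1$ variables, I would take a homogeneous ideal $I \subseteq R = K[x_1,\ldots,x_n]$ containing a regular sequence $f_1,\ldots,f_n$ with $\deg f_i = a_i$. After a generic change of coordinates I may assume $f_n$ is, up to the lower $f_i$'s and a unit, congruent to $x_n^{a_n}$ modulo $(f_1,\ldots,f_{n-1})$; more carefully, by the equivalence recorded in Remark \ref{Rmk3.5} (Caviglia–Maclagan, \cite[Propositions 9 and 10]{CM}) it suffices to prove $\EGH_{n,(a_1,\ldots,a_{n-1}),n-1}$, i.e. to handle an ideal $I$ containing only the length-$(n-1)$ regular sequence $f_1,\ldots,f_{n-1}$ inside $K[x_1,\ldots,x_n]$, and by the same reduction this follows from $\EGH_{(a_1,\ldots,a_{n-1}),n-1}$ which holds by the induction hypothesis (the degrees $a_1,\ldots,a_{n-1}$ themselves satisfy the required chain of inequalities, being a sub-collection). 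Third — and this is where the hypothesis on $a_n$ is actually used — I would verify that the lex-plus-powers ideal $L' = (x_1^{a_1},\ldots,x_{n-1}^{a_{n-1}}) + J'$ produced in $n$ variables by that reduction can be replaced by a genuine lex-plus-powers ideal $L = (x_1^{a_1},\ldots,x_n^{a_n}) + J$ associated with the full degree sequence without changing the Hilbert function; here one checks that adjoining $x_n^{a_n}$ to $L'$ does not alter $\Hf$ because every monomial divisible by $x_n^{a_n}$ of degree $\le s$ is already forced into $L'$ by the lex condition once $a_n$ is large, using $a_n \ge s'$. Finally, I would invoke Proposition \ref{dual} (the liaison duality) together with Remark \ref{HF_of_ci} to propagate the Hilbert-function agreement from the "small" degrees handled by induction across to all degrees $\ge 0$, exactly as in the proof of Lemma \ref{lemma}.

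I expect the main obstacle to be the third step: making precise, and Hilbert-function-exact rather than merely inequality-level, the claim that the large power $x_n^{a_n}$ is "invisible" in the relevant degree range, and patching the induction hypothesis in $n-1$ variables to the lex-plus-powers format in $n$ variables. One has to be careful that the lex ideal $J$ one builds in $R = K[x_1,\ldots,x_n]$ genuinely has the right graded dimensions in every degree — including degrees above $s'$ where the power $x_{n-1}^{a_{n-1}}$ has already "turned off" the $(n-1)$-variable picture but $x_n^{a_n}$ has not yet "turned on" — and that the compression/lex operations commute appropriately with the passage between $R'$ and $R$. The Clements–Lindström theorem (quoted in the excerpt) is the tool that guarantees a lex-plus-powers ideal with the prescribed Hilbert function exists once the numerical function is shown to be an admissible lex-plus-powers Hilbert function for the degree sequence $(a_1,\ldots,a_n)$, so the real content is the numerical/combinatorial comparison driven by the inequality $a_n \ge \sum_{i=1}^{n-1}(a_i-1)$.
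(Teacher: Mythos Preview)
Your overall plan---induction on $n$, passage through Remark~\ref{Rmk3.5} to obtain $\EGH_{n,(a_1,\ldots,a_{n-1}),n-1}$ from the inductive hypothesis $\EGH_{(a_1,\ldots,a_{n-1}),n-1}$, and an appeal to the liaison symmetry of Lemma~\ref{lemma}---is exactly the Caviglia--Maclagan strategy the paper summarizes (the paper gives no self-contained proof, only pointers to \cite{CM,CDS}).

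There is, however, a real gap in your Step~3. You assert that adjoining $x_n^{a_n}$ to $L'=(x_1^{a_1},\ldots,x_{n-1}^{a_{n-1}})+J'$ leaves the Hilbert function unchanged because every monomial of degree $\le s$ divisible by $x_n^{a_n}$ already lies in $L'$. This is false: $x_n^{a_n}$ is the lex-smallest monomial of its degree, so $x_n^{a_n}\in L'$ would force $(L')_{a_n}=R_{a_n}$, i.e.\ $\Hf_{R/I}(a_n)=0$, which already fails for $I=(f_1,\ldots,f_n)$ whenever $a_n<s$. The agreement $L'_e=(L'+(x_n^{a_n}))_e$ is valid only for $e<a_n$, so the induction delivers $\EGH_{(a_1,\ldots,a_n),n}(d)$ only for $d\le a_n-2$; Lemma~\ref{lemma} then supplies $d\ge s-(a_n-2)-1=s'$. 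When $a_n>s'$ these two ranges cover every $d$---this is precisely the \cite{CM} argument---but at $a_n=s'$ the degree $d=a_n-1=s'-1$ is the fixed point of the symmetry $d\leftrightarrow s-d-1$ and is reached by neither range. Closing this single missing degree is exactly the contribution of \cite{CDS}, whose method (summarized immediately after the theorem in the paper) is not a refinement of the symmetry argument but a direct implication: if an ideal containing $f_1,\ldots,f_{n-1}$ satisfies $\EGH_{n,(a_1,\ldots,a_{n-1}),n-1}$, then adjoining any regular $f_n$ of degree $\ge s'$ produces an ideal satisfying full EGH. Your Steps~3--4 blur the degree-by-degree and all-degrees pictures, and as written the proposal would establish only the strict-inequality case.
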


The strict inequality was shown by Caviglia-Maclagan in \cite{CM}. Their proof used an inductive argument on $n$, and followed from Lemma \ref{lemma} and Remark \ref{Rmk3.5}. Very recently, Caviglia-De Stefani \cite{CDS} extended this degree growth condition by including the equality. Their work actually provided a stronger case.  They showed that if a homogeneous ideal $I \subseteq R$ contains a regular sequence $f_1,\ldots, f_{n-1}$ with degrees $2\leq a_1 \leq \ldots \leq a_{n-1}$ satisfies the $\EGH_{n, (a_1,\ldots, a_{n-1}), n-1}$, then $I+(f_n)$ satisfies the EGH conjecture for any $f_n$ where $f_1,\ldots, f_n$ form a regular sequence and $\deg f_n \geq \sum\limits_{i=1}^{n-1}(a_i-1)$. (see \cite[Theorem 3.6]{CDS}).

%


The result of Caviglia-Maclagan with the recent improvement by Caviglia-De Stefani provides an affirmative answer for the EGH conjecture for a large case in terms of the degrees $a_1,\ldots, a_n$.  One of the interesting case that is not covered by this result is when $a_1=a_2=\ldots = a_n$, more specifically as in Conjecture \ref{orig-EGH} when $n\geq 4$.  We will focus on the quadratic case $a_i=2$ for all $i=1,\ldots, n$ separately (see Subsection \ref{subsec-quadrics}).

In \cite{Cooper}, Cooper approached the EGH conjecture for $n=3$ in a geometric setting by investigating the Hilbert functions of the subsets of complete intersections in $\mathbb{P}^2$ and $\mathbb{P}^3$. She showed that the $\EGH_{(a_1,a_2, a_3), 3}$ is true for the degrees $(2, a, a)$ for $a\geq 2$ and $(3, a, a)$ for $a\geq 3$.

Another result for $n=3$ for the Gorenstein ideals containing a regular sequence with degrees $2\leq a_1 \leq a_2 \leq a_3$ was proven by Chong \cite{Chong}.

\medskip

\subsection{EGH via liaison.} 
Chong's work covers EGH beyond Gorenstein ideals in $K[x,y,z]$. It uses the linkage theory and studies a special subclass of licci ideals.  First, we would like to review some definitions and concepts related to linkage theory for ideals in $R=K[x_1,\ldots, x_n]$ to present Chong's result.

Let $I, I' \subseteq R$ be homogeneous ideals of height $r\leq n$.  If there exists a regular sequence $g_1,\ldots, g_r$ such that the complete intersection $\mathfrak{c} = (g_1,\ldots, g_r) \subseteq I\cap I'$, and $I = \mathfrak{c} : I'$ and $I' = \mathfrak{c} : I$, then we say that  $I$ and $I'$ are \textit{linked (algebraically) via $\mathfrak{c}$}. We express this linkage as $I\linkby[\mathfrak{c}] I'$.   If $I$ minimally contains a regular sequence $f_1,\ldots, f_r$ with degrees $2\leq a_1\leq \ldots \leq a_r$ and if the link $\mathfrak{c}$ is a complete intersection of type $(a_1,\ldots, a_r)$ then we say $\mathfrak{c}$ is a \textit{minimal link}.

Suppose that there is a finite sequence of links $I \linkby[\mathfrak{c}_1] I_1 \linkby[\mathfrak{c}_2] \cdots \linkby[\mathfrak{c}_t]I_t$ where $I_t$ is a complete intersection, we say that $I$ is in the linkage class (a.k.a. liaison class) of the complete intersection $I_t$.  An ideal in the linkage class of a complete intersection is called \textit{licci}.  We next state the work of Chong on this.

\begin{theorem}\cite{Chong} Let $2\leq a_1\leq \ldots \leq a_n$, and $I\subseteq R$ be a a homogeneous ideal containing a regular sequence of degrees $a_1,\ldots, a_n$.  
If $I$ is licci such that $I \linkby[\mathfrak{c}_1] I_1 \linkby[\mathfrak{c}_2] \cdots \linkby[\mathfrak{c}_t]I_t$ where each link $\mathfrak{c}_i$ has type $\bar{a}_i$ for $i=1,\ldots, n$ with $\bar{a}_1\geq \bar{a}_2 \geq \ldots \geq \bar{a}_n$, and $\mathfrak{c}_1$ is a minimal link, that is $\bar{a}_1 = (a_1,\ldots, a_n)$, then there is a lex-plus-powers ideal associated with degrees $a_1,\ldots, a_n$ with the same Hilbert function as $I$.
\end{theorem}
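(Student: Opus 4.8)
The plan is to isolate what a single algebraic link does to the property of ``having the Hilbert function of a lex-plus-powers ideal'', and then to climb the given chain from the terminal complete intersection $I_t$ up to $I=I_0$. Throughout, write $P_{\bar a}:=(x_1^{a_1},\dots,x_n^{a_n})$ for a degree vector $\bar a=(a_1\le\dots\le a_n)$, and say $\bar b\le\bar a$ when $b_i\le a_i$ for all $i$.

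\emph{Step 1 (the one-link lemma).} I would first establish the following. If $J$ and $J'$ are linked via a complete intersection $\mathfrak{c}$ of type $\bar a$, and if $\Hf_{R/J'}=\Hf_{R/L'}$ for some lex-plus-powers ideal $L'$ whose associated degrees $\bar b$ satisfy $\bar b\le\bar a$, then $\Hf_{R/J}=\Hf_{R/L}$ for some lex-plus-powers ideal $L$ associated with degrees \emph{exactly} $\bar a$. The argument runs parallel to that of Lemma~\ref{lemma}. Since $\bar b\le\bar a$, every $x_i^{a_i}$ is divisible by $x_i^{b_i}$, so $P_{\bar a}\subseteq L'$; passing to $R/P_{\bar a}$ and applying the Clements-Lindstr\"om theorem therefore produces a lex-plus-powers ideal $L''$ associated with degrees $\bar a$ with $\Hf_{R/L''}=\Hf_{R/L'}$ (this is the step that trades the smaller powers $\bar b$ for the powers $\bar a$). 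Now Proposition~\ref{dual} applied to $\mathfrak{c}\subseteq J$ gives, with $s=\sum_i(a_i-1)$, the identity $\Hf_{R/J}(j)=\Hf_{R/\mathfrak{c}}(j)-\Hf_{R/J'}(s-j)$; replacing $\Hf_{R/\mathfrak{c}}$ by $\Hf_{R/P_{\bar a}}$ via Remark~\ref{HF_of_ci} and $\Hf_{R/J'}$ by $\Hf_{R/L''}$ yields $\Hf_{R/J}(j)=\Hf_{R/P_{\bar a}}(j)-\Hf_{R/L''}(s-j)$. Applying Proposition~\ref{dual} a second time, now to $P_{\bar a}\subseteq L''$, together with the symmetry $\Hf_{R/P_{\bar a}}(j)=\Hf_{R/P_{\bar a}}(s-j)$ of Remark~\ref{HF_of_ci}, one identifies this with $\Hf_{R/(P_{\bar a}:L'')}(j)$. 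Thus $J$ has the same Hilbert function as the monomial ideal $P_{\bar a}:L''\supseteq P_{\bar a}$, and one last invocation of Clements-Lindstr\"om produces the required lex-plus-powers ideal $L$ associated with degrees $\bar a$.

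\emph{Step 2 (climbing the chain).} Put $I_0=I$. Since $I$ contains a regular sequence of length $n$, every $I_k$ in the chain is $\mathfrak{m}$-primary of height $n$, so each colon ideal appearing contains the complete intersection relative to which it is taken, and Proposition~\ref{dual} is legitimately available at every step. The terminal ideal $I_t$ is a complete intersection whose type is $\le\bar a_t$ (a standard fact about nested complete intersections, since $\mathfrak{c}_t\subseteq I_t$); hence by Remark~\ref{HF_of_ci} — noting that the monomial complete intersection of $I_t$'s type contains $P_{\bar a_t}$, so Clements-Lindstr\"om applies — $I_t$ has the same Hilbert function as a lex-plus-powers ideal whose degrees are $\le\bar a_t$. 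I would then prove by descending induction on $k$ that $I_k$ has the same Hilbert function as a lex-plus-powers ideal with degrees $\le\bar a_{k+1}$: granting this for $I_{k+1}$, the monotonicity $\bar a_{k+2}\le\bar a_{k+1}$ of the link types shows its degrees are also $\le\bar a_{k+1}$, so Step~1 applied to the link between $I_k$ and $I_{k+1}$ realized by $\mathfrak{c}_{k+1}$ (of type $\bar a_{k+1}$) gives $I_k$ the Hilbert function of a lex-plus-powers ideal with degrees exactly $\bar a_{k+1}$, a fortiori $\le\bar a_{k+1}$ and $\le\bar a_k$. For $k=0$ the relevant link is the \emph{minimal} link $\mathfrak{c}_1$, of type $\bar a_1=(a_1,\dots,a_n)$, so Step~1 delivers a lex-plus-powers ideal associated with degrees exactly $(a_1,\dots,a_n)$ having the same Hilbert function as $I$, which is the assertion of the theorem.

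\emph{Where the work lies.} The Hilbert-function manipulation in Step~1 is routine once Proposition~\ref{dual}, Remark~\ref{HF_of_ci} and Clements-Lindstr\"om are in hand — it is essentially the computation already carried out for Lemma~\ref{lemma}. The point that needs real care is the bookkeeping of degree vectors: one must formulate the one-link lemma in the asymmetric shape ``input degrees $\le\bar a$, output degrees $=\bar a$'', and then recognize that the hypothesis $\bar a_1\ge\bar a_2\ge\dots\ge\bar a_t$ on the types of the links is exactly what makes the output of each link an admissible input for the next. Secondary technical points are the repeated use of the Clements-Lindstr\"om lifting — which converts a monomial ideal containing $(x_1^{a_1},\dots,x_n^{a_n})$ into an honest lex-plus-powers ideal of $R$ with the same Hilbert function — and the verification that every colon ideal in the chain contains the complete intersection needed for Proposition~\ref{dual}. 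The licci hypothesis with a minimal first link is used only to guarantee the existence of such a degree-monotone chain terminating in a complete intersection; no further structure theory of licci ideals is required.
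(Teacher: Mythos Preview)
The survey does not supply its own proof of this theorem; it merely records Chong's result with a citation, so there is no in-paper argument to compare against line by line. That said, your proposal is correct and is precisely the strategy of Chong's original proof: a one-link transfer lemma proved with Proposition~\ref{dual}, Remark~\ref{HF_of_ci}, and Clements--Lindstr\"om (the very computation already exhibited in the proof of Lemma~\ref{lemma}), followed by a descending induction along the chain of links, with the degree-monotonicity hypothesis $\bar a_1\ge\bar a_2\ge\cdots\ge\bar a_t$ ensuring that the output of each step is an admissible input for the next and the minimality of $\mathfrak{c}_1$ pinning the final degree vector to $(a_1,\dots,a_n)$.

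Two small remarks on the write-up. First, your claim that the terminal complete intersection $I_t$ has type $\le\bar a_t$ because $\mathfrak{c}_t\subseteq I_t$ is correct but deserves a one-line justification rather than ``standard fact'': if the sorted types were not componentwise comparable, say $c_k>a_k$, then $g_1,\dots,g_k$ would lie in the height-$(k-1)$ ideal $(h_1,\dots,h_{k-1})$, contradicting that they form a regular sequence of length $k$. Second, your induction statement ``$I_k$ has the Hilbert function of a lex-plus-powers ideal with degrees $\le\bar a_{k+1}$'' is only indexed for $k\le t-1$; the case $k=t$ is handled separately by the previous sentence, so it would be cleaner to say so explicitly and start the induction at $k=t-1$. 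Neither point affects the validity of the argument.
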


In the same paper \cite{Chong}, Chong also proved that $\EGH_{n, (a_1,\ldots, a_r), r}$ holds for the licci ideals where the types of the links satisfy the ascending condition and the first link is minimal. His result on Gorenstein ideals when $n=3$ we mentioned in previous subsection is a consequence of this result.
\medskip

\subsection{EGH via the structure of the regular sequence.}

Let $I \subseteq R=K[x_1,\ldots, x_n]$ be a homogeneous ideal containing a regular sequence $f_1,\ldots, f_n$ with degree $2\leq a_1 \leq \ldots \leq a_n$, respectively. By Clements-Lindstr\"om's result, we already know that $\EGH_{(a_1,\ldots,a_n), n}$ is true when $f_i = x_i^{a_i}$ for all $i=1,\ldots, n$.

Mermin and Murai \cite{MM} proved a special case of $\EGH_{n, (a_1,\ldots, a_r), r}$, $r<n$, when $\Char K =0$. For the homogeneous ideals containing a regular sequence $f_1,\ldots, f_r$ formed by monomials with degrees $2\leq a_1 \leq \ldots \leq a_r$, they showed the existence of lex-plus-powers ideal associated with $(a_1,\ldots, a_r)$ with the same Hilbert function.

Another notable work regarding the structure of the regular sequence contained in the ideal is done by Abedelfatah in \cite{Abed}. He showed that if a homogeneous ideal $I$ containing a regular sequence $f_1,\ldots, f_n$ such that $\deg f_i =a_i$, $i=1,\ldots,n$ and each $f_i$ splits into linear factors completely, then $I$ has the same Hilbert function of a lex-plus-powers ideal $(x_1^{a_1},\ldots, x_n^{a_n})+J$  in $R$.

Shortly after, Abedelfatah extended this result in \cite{Abed2}.

\begin{theorem} Let $\mathfrak{a}$ be an ideal generated by the product of linear forms and contain a regular sequence $f_1,\ldots, f_r$ with degrees $a_1,\ldots, a_r$. Let $I$ be a homogeneous ideal in $R$ such that $(f_1\ldots, f_r) \subset \mathfrak{a} \subset I$ then the Hilbert function of $I$ is the same as the lex-plus-powers ideal containing powers $x_1^{a_1},\ldots,x_r^{a_r}$.
\end{theorem}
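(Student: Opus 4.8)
The plan is to push the problem into the maximal-length case, where Abedelfatah's earlier theorem on ideals containing a regular sequence of products of linear forms \cite{Abed} can be quoted verbatim; the one genuinely new thing to supply is that $\mathfrak{a}$ itself already contains a regular sequence of products of linear forms with \emph{exactly} the degrees $a_1,\dots,a_r$. First I would reduce to $r=n$. We may assume $K$ is infinite, since extending scalars changes neither Hilbert functions, nor regular sequences, nor the property of being a product of linear forms. Pick degrees $a_r\le a_{r+1}\le\cdots\le a_n$ and products $g_{r+1},\dots,g_n$ of sufficiently generic linear forms of those degrees so that $f_1,\dots,f_r,g_{r+1},\dots,g_n$ is a regular sequence, and set $\mathfrak{a}':=\mathfrak{a}+(g_{r+1},\dots,g_n)$ and $I':=I+(g_{r+1},\dots,g_n)$. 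Then $\mathfrak{a}'$ is still generated by products of linear forms, it contains the maximal regular sequence $f_1,\dots,f_r,g_{r+1},\dots,g_n$, and $(f_1,\dots,f_r,g_{r+1},\dots,g_n)\subseteq\mathfrak{a}'\subseteq I'$ with the adjoined forms lying in $\mathfrak{a}'$; the Caviglia--Maclagan reduction (Remark~\ref{Rmk3.5}, \cite[Proposition~9]{CM}) then lets me assume $r=n$, so $\mathfrak{c}:=(f_1,\dots,f_n)\subseteq\mathfrak{a}\subseteq I$ is a complete intersection of type $(a_1,\dots,a_n)$.

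The core of the proof is the claim that an ideal generated by products of linear forms containing \emph{some} regular sequence of degrees $a_1\le\cdots\le a_n$ contains a regular sequence $p_1,\dots,p_n$ of products of linear forms with $\deg p_i=a_i$. I would construct the $p_i$ greedily: having chosen $p_1,\dots,p_t$, the height-$t$ minimal primes of $(p_1,\dots,p_t)$ are finitely many linear primes $Q_1,\dots,Q_N$ spanned by linear factors of the $p_i$; since $\mathfrak{a}_{a_{t+1}}\ne 0$ there is a minimal generator $h$ of $\mathfrak{a}$ with $\deg h\le a_{t+1}$, and multiplying $h$ by $a_{t+1}-\deg h$ generic linear forms produces a product of linear forms in $\mathfrak{a}$ of degree $a_{t+1}$ whose generic factors automatically avoid every $Q_j$ --- the work is to arrange that $h$ (or some admissible low-degree generator) also avoids all the $Q_j$, which I would try to force by choosing the earlier $p_i$ generically among the products of linear forms of the right degrees lying in $\mathfrak{a}$, after a preliminary generic change of coordinates putting the $Q_j$ in general position relative to the fixed linear factors of the low-degree generators of $\mathfrak{a}$ (with separate, routine attention to repeated factors and to the case $\deg h=a_{t+1}$). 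Granting the claim, $(p_1,\dots,p_n)\subseteq\mathfrak{a}\subseteq I$ realizes $I$ as an ideal containing a maximal regular sequence of products of linear forms of type $(a_1,\dots,a_n)$, so \cite{Abed} produces a lex ideal $J$ with $\Hf_{R/I}=\Hf_{R/L}$ for $L=(x_1^{a_1},\dots,x_n^{a_n})+J$; undoing the first-step reduction returns the lex-plus-powers ideal of type $(a_1,\dots,a_r)$ for the original data.

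The step I expect to be the main obstacle is this extraction of a regular sequence of products of linear forms of the \emph{prescribed} degrees: ordinary graded prime avoidance inside $\mathfrak{a}$ yields a regular sequence assembled from $K$-linear combinations of generators --- sums, not products, of linear forms --- so one has to exploit the multiplicative structure genuinely while simultaneously controlling degrees. If the claim resists a clean proof in this form, the fallback is to reopen Abedelfatah's argument for the maximal case and re-run it for an ideal $I$ squeezed between a complete intersection and an ideal generated by products of linear forms: use Remark~\ref{HF_of_ci}, Proposition~\ref{dual}, and the self-duality in Lemma~\ref{lemma} to cut the problem down to bounding $\Hf_{R/I}(d{+}1)$ in terms of $\Hf_{R/I}(d)$ for $0\le d\le\floor{\frac{s-1}{2}}$ (with $s=\sum_{i=1}^n(a_i-1)$), and carry the extra generators of $I$ over $\mathfrak{a}$ through an induction on the number of variables via a generic hyperplane restriction, under which products of linear forms restrict to products of linear forms and a non-maximal regular sequence restricts to one.
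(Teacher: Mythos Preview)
The survey does not prove this theorem at all: it is stated as a result of Abedelfatah \cite{Abed2} and cited without proof, so there is no ``paper's own proof'' to compare your attempt against.

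That said, your plan has a genuine gap at the step you yourself flag as the main obstacle. The reduction to $r=n$ via \cite[Proposition~10]{CM} and adjoining generic products of linear forms is fine. But the central claim --- that an ideal $\mathfrak{a}$ generated by products of linear forms which contains \emph{some} regular sequence of degrees $a_1\le\cdots\le a_n$ must contain a regular sequence of products of linear forms with exactly those degrees --- is not established by your greedy sketch. The difficulty is precisely the one you name: at step $t+1$ you need a generator $h$ of $\mathfrak{a}$ of degree $\le a_{t+1}$ that avoids every height-$t$ minimal prime $Q_j$ of $(p_1,\dots,p_t)$, and ``choosing the earlier $p_i$ generically'' does not by itself force this, because the $Q_j$ are linear primes determined by the finite set of linear factors appearing among the generators of $\mathfrak{a}$, and a low-degree generator $h$ may well land in some $Q_j$ regardless of how generically you picked $p_1,\dots,p_t$. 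Padding $h$ by generic linear factors keeps the product out of the $Q_j$ only if $h$ itself is already out; otherwise the padded product is still in $Q_j$. Your ``generic change of coordinates'' suggestion does not help either, since such a change acts on $\mathfrak{a}$ and on the $Q_j$ simultaneously and preserves the incidence you are trying to destroy. So as written the argument does not go through, and the fallback you describe (re-running the internal machinery of \cite{Abed} with hyperplane restriction and the self-duality of Lemma~\ref{lemma}) is closer to what one actually needs --- but that is a different proof, not a reduction to the black-box statement of \cite{Abed}.
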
 
 
The previous result in \cite{Abed} is simply the case when $\mathfrak{a} = (f_1,\ldots, f_r)$.  Let $I \subset R$ be a height $r$ monomial ideal containing a regular sequence of degrees $a_1\leq \ldots \leq a_r$, then this theorem of Abedelfatah confirms that $I$ has the same Hilbert functions as a lex-plus-powers containing $x_1^{a_1},\ldots, x_r^{a_r}$. This also improves another related result given by Caviglia-Constantinescu-Varbaro in \cite{CCV} for height $r$ monomial ideals generated by quadrics.
\medskip

\subsection{When $\mathbf{a_1=\ldots = a_n=2}$.}\label{subsec-quadrics}

Finally we focus on the case when the regular sequence is formed by quadrics as originally conjectured by the Eisendbud-Green-Harris as in Conjecture \ref{orig-EGH}. For simplicity, we refer it as 
$\EGH_{(2,2,\ldots, 2), n} = \EGH_{{\bf \bar 2}, n}$.

We have already mentioned the cases when $n=2$ by Richert \cite{Ri} as his result shows EGH for any degree when $n=2$. Moreover, in $K[x,y,z]$, we have seen that the EGH conjecture for the degrees $(2,2,2)$ was covered by Cooper \cite{Cooper} and Caviglia-De Stefani \cite{CDS} separately, and quadratic monomial ideal case by \cite{CCV}.

In terms of the weaker version of the EGH conjecture for consecutive degrees given in Definition \ref{weakerEGH}, using Proposition \ref{Fr-prop} given by Francisco,  $\EGH_{{\bf \bar 2}, n}(2)$ is true for almost complete intersections $I = (f_1,\ldots, f_n, g)$ where $\deg f_i =2$ for all $i=1,\ldots,n$ and $\deg g=2$.  More precisely,  $\Hf_{R/I}(3) \leq \Hf_{R/L}(3)$ where $L $ is the lex-plus-powers ideal containing the squares of the variables.

An analogous result on  $\EGH_{{\bf \bar 2}, n}(2)$ for the ideals generated by a quadratic regular sequence plus two more generators is given in the following theorem. 

\begin{theorem}\cite{GH}\label{GH}  For $n\geq 5$, $\EGH_{{\bf \bar 2}, n}(2)$ holds for homogeneous ideals minimally generated by a regular sequence of quadrics and two more generators whose degrees are at least $2$. 
\end{theorem}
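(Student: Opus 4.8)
The plan is to reduce the statement, via the known reductions to the purely quadratic case, to a single numerical inequality for a pencil of quadrics inside an Artinian complete intersection, and then to prove that inequality by a case analysis on the geometry of the pencil. The hard part will be the last step, and in particular handling the configurations in which the complete intersection and the pencil are simultaneously degenerate.

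First I would dispose of the cases where at least one of the two extra generators has degree $\geq 3$. If some $\deg g_i\geq 4$, that generator plays no role for $\Hf_{R/I}$ in degrees $2$ and $3$, so one is reduced either to an almost complete intersection (where Proposition \ref{Fr-prop} applies) or to two extra generators of degree $2$ or $3$; the remaining possibilities $(\deg g_1,\deg g_2)\in\{(2,3),(3,3)\}$ are settled by a direct computation of $\Hf_{R/I}(2)$ and $\Hf_{R/I}(3)$ — both forced by minimal generation — against an explicit lex-plus-powers ideal $(x_1^2,\ldots,x_n^2)+J$ with $J$ generated in degrees $\leq 3$. So assume $\deg g_1=\deg g_2=2$; put $\mathfrak{c}=(f_1,\ldots,f_n)$ and $A=R/\mathfrak{c}$. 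Then $A$ is Artinian Gorenstein with $\Hf_A(t)=\binom{n}{t}$, so $\dim_K\mathfrak{c}_3=n^2$ (the Koszul syzygies of $f_1,\ldots,f_n$ lie in degree $4$, hence $\mathfrak{c}_3=\bigoplus_i R_1 f_i$), and minimality forces $\Hf_{R/I}(2)=\binom{n}{2}-2$ with images $\bar g_1,\bar g_2\in A_2$ linearly independent. The smallest lex-plus-powers ideal agreeing with $\Hf_{R/I}$ in degree $2$ is $L^{*}=(x_1^2,\ldots,x_n^2,x_1x_2,x_1x_3)$, for which $\Hf_{R/L^{*}}(3)=\binom{n}{3}-(2n-5)$; and every value at most $\Hf_{R/L^{*}}(3)$ occurs as the degree-$3$ Hilbert function of some lex-plus-powers ideal with the same degree-$2$ value, obtained by adjoining to $J$, one at a time, the largest cubic monomials not yet present. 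Hence $\EGH_{{\bf\bar 2},n}(2)$ holds for $I$ if and only if $\dim_K(\bar g_1,\bar g_2)_3\geq 2n-5$ in $A$, equivalently the degree-$3$ part $\Sigma$ of the first syzygy module of $(\bar g_1,\bar g_2)$ over $A$ satisfies $\dim_K\Sigma\leq 5$.

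To bound $\dim_K\Sigma$ I would use that a syzygy is exactly a pair $(\ell_1,\ell_2)$ of linear forms with $\ell_1 g_1+\ell_2 g_2\in\mathfrak{c}_3$, together with the rigidity that $\mathfrak{c}_3$ has no nonzero linear syzygies. Then case on the pencil $U=\langle g_1,g_2\rangle\subseteq R_2$. If $g_1,g_2$ share a common linear factor $h$, write $g_1=h g_1'$, $g_2=h g_2'$ with $g_1',g_2'$ linear; then $(\bar g_1,\bar g_2)_3\supseteq \bar h\cdot(\overline{g_1'}A_1+\overline{g_2'}A_1)$ and one is reduced to controlling $\bar h$ times a two-dimensional space of linear forms in $A_3$, where the deficiency from the expected value $2n-1$ is at most $4$. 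If $g_1,g_2$ are coprime but one of them, say $g_1$, is a product of two (possibly equal) linear forms, analyze the regular sequence formed by those factors, whose behaviour in $A$ in degrees $\leq 3$ is governed by the complete-intersection structure. In the remaining case $U$ contains a quadric of large rank, and $\dim_K(\bar g_1,\bar g_2)_3$ is within $5$ of the full value $2n$ by a dimension count in $R_3$ that again invokes the absence of linear syzygies of $\mathfrak{c}$.

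The main obstacle is the simultaneous degeneracy: when $\mathfrak{c}$ itself is highly special — say $\mathfrak{c}=(x_1^2,\ldots,x_n^2)$ — and $g_1,g_2$ are chosen to make $\dim_K\big((R_1 g_1+R_1 g_2)\cap\mathfrak{c}_3\big)$ as large as possible. The configuration $g_1=x_1x_2$, $g_2=x_1x_3$ already gives $\dim_K(\bar g_1,\bar g_2)_3=2n-5$, so the bound is sharp (there $I$ has the Hilbert function of $L^{*}$), and the case analysis must show that no other configuration does worse. This is also where the hypothesis $n\geq5$ enters: it is precisely the range $2n\leq\binom{n}{3}$ in which a general pencil attains the full value $2n$, so that the loss against $\mathfrak{c}_3$ is controlled by the small constant $5$; the cases $n\leq4$ must be treated separately. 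I expect the real difficulty to lie entirely in organizing this case analysis uniformly rather than in any single estimate.
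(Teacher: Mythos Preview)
Your reduction and reformulation are correct and coincide with the paper's sketch: the survey records that, via \cite[Lemma~2.6]{GH}, one reduces to an ideal generated by $n+2$ quadrics, identifies the extremal lex-plus-powers target $L^{*}=(x_1^2,\ldots,x_n^2,x_1x_2,x_1x_3)$, and proves the bound by ``analyzing the linear relations among the generators of the ideal $I$'', which is exactly your inequality $\dim_K\Sigma\le 5$. Your numerics ($\dim_K\mathfrak{c}_3=n^2$, $\Hf_{R/L^*}(3)=\binom{n}{3}-(2n-5)$, the sharpness witness $g_1=x_1x_2$, $g_2=x_1x_3$, and the role of $n\ge 5$ via $2n\le\binom{n}{3}$) are all correct, and your handling of the non-quadratic extra generators is fine --- note that the $(2,3)$ case is not literally ``forced by minimal generation'' but does follow immediately from Francisco's almost complete intersection bound, which you already invoked.

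Since the survey does not spell out how \cite{GH} obtains $\dim_K\Sigma\le 5$, I cannot line up your case analysis against theirs. I would flag one point: your trichotomy is organized purely around the geometry of the pencil $\langle g_1,g_2\rangle$, but $\Sigma$ measures the interaction of the pencil with $\mathfrak{c}$. In your Case~1, for instance, the asserted ``deficiency at most $4$'' requires bounding both $\dim_K\big((g_1'R_1+g_2'R_1)\cap\mathfrak{c}_2\big)$ and the kernel of $\bar h\cdot(-)\colon A_2\to A_3$, and neither is controlled by the pencil alone. So the skeleton is right, but expect the structure of the regular sequence $f_1,\ldots,f_n$ to enter explicitly in the final argument, not only the rank stratification of $g_1,g_2$.
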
 
Thanks to \cite[Lemma 2.6]{GH}, which is mentioned in Remark \ref{rmk-quadrics}, to prove Theorem \ref{GH} it was enough to show the statement for an ideal $I$ generated by $n+2$ quadrics containing a maximal regular sequence.  More precisely, it sufficed to show the Hilbert function of the lex-plus-powers ideal $(x_1^2,\ldots, x_n^2, x_1x_2, x_1x_3)$ in degree $3$ is greater than or equal to $\Hf_{R/I}(3)$.  This was shown by analyzing the linear relations among the generators of the ideal $I$.

\vspace*{1ex}

For a homogeneous ideal $I = (f_1,\ldots, f_n, g_1,\ldots, g_m)$ containing quadratic regular sequence $f_1,\ldots, f_n$, it is easy to see that if each $g_i$ has degree $>2$ then they don't contribute the dimension in degree $2$ and $\dim_K I_2= n$.  Therefore,  any lex plus power ideal $L=(x_1^2,\ldots, x_n^2)+J$ where $J$ is generated by monomials of degree $> 2$ gives $\dim_KL_2=n$ as well.  


We finish this section by presenting the known cases of the original EGH conjecture for $n\geq 4$.

\begin{theorem} $\EGH_{{\bf \bar 2}, n}$ is true when 
\begin{itemize}
\item[(a)] $n=4$ by Chen \cite{Chen},
\item[(b)] $n=5$ by the author and Hochster \cite{GH}.
\end{itemize}
\end{theorem}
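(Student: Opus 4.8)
The plan is to reduce both statements --- which concern the quadratic case $a_1=\cdots=a_n=2$ --- to a tractable finite family of assertions about ideals generated by quadrics, and then to resolve that family through a structural study of their linear syzygies. First apply Lemma~\ref{lemma} with $s=\sum_{i=1}^n(a_i-1)=n$: it reduces $\EGH_{{\bf \bar 2}, n}$ to verifying $\EGH_{{\bf \bar 2}, n}(d)$ for $0\le d\le\floor{\frac{n-1}{2}}$, i.e.\ for $d\in\{0,1\}$ when $n=4$ and for $d\in\{0,1,2\}$ when $n=5$. The case $d=0$ is free by Remark~\ref{EGH(0)}. For $n=4$ the duality $\EGH_{{\bf \bar 2}, 4}(d)\Longleftrightarrow\EGH_{{\bf \bar 2}, 4}(3-d)$ of Lemma~\ref{lemma} identifies $d=1$ with $d=2$, so only $d=2$ survives; for $n=5$ the one extra case $d=1$ (equivalently $d=3$) is elementary, since a full-length regular sequence of quadrics forces $\dim_K I_2$ to be large enough that the growth to degree $2$ already meets the EGH bound for each value of $\Hf_{R/I}(1)$, and a matching lex-plus-powers ideal is supplied by the Clements--Lindstr\"om theorem.

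The substance of both theorems is therefore $\EGH_{{\bf \bar 2}, n}(2)$, the bound on the growth from degree $2$ to degree $3$. By Remark~\ref{rmk-quadrics}, i.e.\ \cite[Lemma~2.6]{GH}, it suffices to take $I=(f_1,\ldots,f_n,g_1,\ldots,g_m)$ minimally generated by $N=n+m$ quadrics and containing the quadratic regular sequence $f_1,\ldots,f_n$. Being generated in degree $2$, such an $I$ satisfies $I_3=R_1 I_2$, so $\dim_K I_3=nN-\ell$, where $\ell$ is the dimension of the space of linear syzygies of the $N$ quadrics; writing $L$ for the lex-plus-powers ideal associated with $(2,\ldots,2)$ that shares the degree-$2$ Hilbert value of $I$, the desired inequality $\Hf_{R/I}(3)\le\Hf_{R/L}(3)$ amounts precisely to an upper bound $\ell\le nN-\dim_K L_3$ on the number of linear syzygies. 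One then stratifies by $m$: the cases $m=0$ and $m=1$ are Remark~\ref{HF_of_ci} and Proposition~\ref{Fr-prop}; for $n\ge5$ the case $m=2$ is Theorem~\ref{GH}; and the range of large $m$ is reduced to that of small $m$ by passing to the link $(f_1,\ldots,f_n):I$, which again contains a regular sequence of $n$ quadrics and, by Proposition~\ref{dual} with $s=n$, has a correspondingly smaller degree-$2$ component whenever $I$ has many quadrics.

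The remaining difficulty --- the part I expect to be the genuine obstacle --- is a finite middle band of values of $m$, roughly $m=3$ when $n=4$ and $m=5$ when $n=5$. There one must bound $\ell$ for a system of $N$ quadrics containing a regular sequence of $n$ quadrics tightly enough to match the lex-plus-powers count, while genuinely using the regular-sequence hypothesis --- a regular sequence of quadrics has no linear syzygies at all, so the content is that even the degenerate systems compatible with containing a complete intersection of $n$ quadrics cannot accumulate too many. My plan would be to extract the needed constraints from the Koszul relations of $f_1,\ldots,f_n$ together with the linear algebra of the quadratic forms involved --- the ranks of the $g_i$ and the structure of the pencils and nets they span --- so as to reduce the middle band to a short, explicit list of configurations that can be checked one at a time, the smallest of which can be confirmed symbolically in \cite{M2}. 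This is, in essence, what is carried out in \cite{Chen} for $n=4$ and in \cite{GH} for $n=5$; the reason the argument halts at $n=5$ is that this middle band, and the syzygy bookkeeping it requires, grow too rapidly with $n$.
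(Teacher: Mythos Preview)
Your reduction via Lemma~\ref{lemma} to $d\le\lfloor\frac{n-1}{2}\rfloor$ and the disposal of $d=0$ by Remark~\ref{EGH(0)} match the paper exactly; the approaches then diverge, and one of your steps does not go through.

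For $n=4$ you take the harder road. The paper never argues at $d=2$: it cites \cite[Proposition~2.1]{Chen}, which proves $\EGH_{{\bf \bar 2}, n}(1)$ for \emph{every} $n\ge 2$, and since Lemma~\ref{lemma} already gives $\EGH_{{\bf \bar 2}, 4}(1)\Leftrightarrow\EGH_{{\bf \bar 2}, 4}(2)$, part~(a) is finished in one line. Your $m$-stratification and syzygy count are therefore unnecessary for $n=4$; note also that Theorem~\ref{GH} is stated only for $n\ge 5$, so it does not even supply your case $m=2$ when $n=4$. (Your informal treatment of $d=1$ for $n=5$ is essentially Chen's argument, so there the two routes reconverge.)

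For $n=5$ your linkage step is broken. You assert that when $m$ is large the link $(f_1,\ldots,f_n):I$ has a smaller degree-$2$ component. For $n=4$ that is true --- Proposition~\ref{dual} with $s=4$ gives $\Hf_{R/(\mathfrak{c}:I)}(2)=\Hf_{R/\mathfrak{c}}(2)-\Hf_{R/I}(2)=m$, so the link has defect $6-m$; but this is nothing more than the duality $d=2\leftrightarrow d=1$ of Lemma~\ref{lemma}, rewritten in the variable $m$. For $n=5$ one has $s=5$, and the same identity becomes
\[
\Hf_{R/(\mathfrak{c}:I)}(2)\;=\;\Hf_{R/\mathfrak{c}}(3)-\Hf_{R/I}(3)\;=\;10-\Hf_{R/I}(3),
\]
which depends on the very quantity you are trying to bound. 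Since $d=2$ is the fixed point of the involution $d\leftrightarrow n-1-d$ when $n=5$, linkage cannot carry you to any already-settled degree, and the promised reduction of large $m$ to small $m$ collapses. The paper avoids this entirely: after Chen's result for $d=1$ it simply invokes the full $\EGH_{{\bf \bar 2},5}(2)$ from \cite{GH} via Theorem~\ref{GH} and Remark~\ref{rmk-quadrics}, with no $m\leftrightarrow m'$ pairing; the syzygy bookkeeping that covers every defect is done inside that reference, not replaced by a linkage shortcut.
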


\begin{proof}
Notice that when $a_1=\ldots = a_n=2$ we get $s= \sum\limits_{i=1}^n (a_i-1) =  n$. Then by Lemma \ref{lemma}, we get $\EGH_{{\bf \bar 2}, n}(d)$ holds if and only if $\EGH_{{\bf \bar 2}, n}(n-1-d)$ holds. 

Using this symmetry, when $n=4$, by Remark \ref{EGH(0)} we trivially have $\EGH_{{\bf \bar 2}, 4}(0)$, therefore we have $\EGH_{{\bf \bar 2}, 4}(3)$. It is enough to show $\EGH_{{\bf \bar 2}, 4}(1)$ and therefore we also get $\EGH_{{\bf \bar 2}, 4}(2)$. 

Similarly, when $n=5$, we know  $\EGH_{{\bf \bar 2}, 5}(0)$ holds, so does $\EGH_{{\bf \bar 2}, 5}(4)$. Then we need to show only  $\EGH_{{\bf \bar 2}, 5}(1)$ and $\EGH_{{\bf \bar 2}, 5}(2)$ because $\EGH_{{\bf \bar 2}, 5}(1)$ implies $\EGH_{{\bf \bar 2}, 5}(3)$.

By \cite[Proposition 2.1]{Chen},  we know that $\EGH_{{\bf \bar 2}, n}(1)$ holds for any $n\geq 2$.
Thus this completes the proof of (a).

On the other hand, the proof of (b) is done as well since $\EGH_{{\bf \bar 2}, 5}(2)$ is true as a result of Theorem \ref{GH} and Remark \ref{rmk-quadrics}.
\end{proof}

\section{Open cases of EGH and more connections.}

Although there has been a significant progress on the EGH conjecture, it is fair to say that the conjecture is still broadly open.  In this section, we discuss the open cases, and also state another well known conjecture related to the EGH conjecture. 

Besides Richert's result on EGH when $n=2$ in \cite{Ri}, we still do not know if the EGH conjecture is true when $n\geq 3$ without assuming any conditions on the degrees or on the homogeneous ideal. 

\begin{question} 
 Is $\EGH_{(a_1,a_2,a_3), 3}$ true for any given degrees $2\leq a_1 \leq a_2 \leq a_3$?
\end{question}

 We have seen that the works by Cooper, Caviglia-Maclagan and Caviglia-De Stefani cover many cases of  $(a_1,a_2,a_3)$ already. On the other hand, Chong's and Abedelfatah's results require certain conditions on the homogeneous ideals. As a result,  we can conclude that it is not known if $\EGH_{(a_1,a_2,a_3), 3}$ is true for the ideals in the following scenario: Let $I$ be a homogeneous ideal containing a regular sequence  $f_1, f_2, f_3$ such that
 \begin{itemize}
 \item the degrees $\deg f_i =a_i$, $i=1,2,3$ satisfy $ 4 \leq a_1\leq a_2 \leq a_3$ and  $a_2 \leq a_3 < a_1+a_2-2$,
 \item $I$ is not a Gorenstein ideal, and
  \item $f_1, f_2, f_3$ cannot be split into linear factors. 
 \end{itemize}

For example, $\EGH_{(4,4,5), 3}$ and $\EGH_{(4,5,5), 3}$ are two open cases with small degrees.

\begin{remark}
If we focus on the original EGH conjecture, that is, when $a_1=\ldots = a_n = 2$, $ \EGH_{{\bf \bar 2}, n}$ is still open when $n\geq6$.
\end{remark}

For a given homogeneous ideal $I$ in $R=K[x_1,\ldots, x_n]$, the graded Betti number of $I$ is $\beta_{i,j}(R/I) = \dim_K (\Tor^R_i(R/I,K))_j$.
Another well-known conjecture motivated by the EGH conjecture is given by Evans and Charalambous if these graded Betti numbers are also concerned (see lex-plus-powers ideals survey \cite{FR}).  This conjecture can be also considered analogous to Bigatti-Hulett-Pardue Theorem \cite{Bi,Hu,Pa} which is a generalization of the Macaulay's theorem for the graded Betti numbers, more precisely, it shows the extremal behavior of lex ideals for the graded Betti numbers among the homogeneous ideals with the same Hilbert function. 

\begin{conjecture}[Lex-Plus-Powers (LPP) conjecture] 
Let  $I$ be a homogeneous ideal containing a regular
sequence of degrees $2\leq a_1\leq \ldots \leq a_n$ in $R=K[x_1,\ldots, x_n]$.  Suppose that there exists a lex-plus-powers ideal $L$ with the same Hilbert function as $I$. Then the graded Betti numbers of $R/I$ cannot exceed those of $R/L$.  
That is, 
$$\mbox{$\beta_{i,j}(R/I) \leq \beta_{i,j}(R/L)$ for all $i$ and $j$.}$$ 
\end{conjecture}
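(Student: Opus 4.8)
The LPP conjecture is strictly stronger than the EGH conjecture -- it adds to the Hilbert-function statement a comparison of \emph{every} graded Betti number -- so any attempt must either restrict to a class of ideals for which EGH is already known, or proceed in parallel with a proof of EGH. Granting that, the plan is to imitate the proof of the Bigatti--Hulett--Pardue theorem (the ``$J$ only, no powers'' case) and replace ``lex ideal'' by ``lex-plus-powers ideal'' throughout, in three stages: first reduce to monomial ideals; then reduce among monomial ideals to those stable under the Borel moves fixing $\mathfrak{a} = (x_1^{a_1},\dots,x_n^{a_n})$; and finally, among those, show that the lex-plus-powers ideal carries the largest graded Betti numbers.

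For the first reduction, in characteristic zero one passes to the generic initial ideal $\operatorname{gin}(I)$ in the reverse-lexicographic order: it is Borel-fixed, has the same Hilbert function as $I$, and satisfies $\beta_{i,j}(R/I)\le\beta_{i,j}(R/\operatorname{gin}(I))$ for all $i,j$. The point to prove here is an ``EGH for generic initial ideals'' statement -- that $\operatorname{gin}$ of an ideal containing a regular sequence of degrees $a_1\le\dots\le a_n$ again contains the monomials $x_1^{a_1},\dots,x_n^{a_n}$ after the generic coordinate change. In arbitrary characteristic one would instead polarize, but polarization does not interact transparently with the power generators, so a genuine difficulty appears already here.

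For the remaining stages, the target numbers $\beta_{i,j}(R/L)$ for $L=(x_1^{a_1},\dots,x_n^{a_n})+J$ are computed by an explicit Eliahou--Kervaire-type complex over $R$ and are thus a known combinatorial quantity. The heart of the argument is a compression argument over the quotient ring $R/\mathfrak{a}$ in the spirit of Clements--Lindstr\"om: one would show that each elementary straightening move -- replacing a monomial by a lex-larger one while staying inside the class of ideals containing $\mathfrak{a}$ with the prescribed Hilbert function -- does not decrease any $\beta_{i,j}$. Combined with Peeva's theorem that the Betti tables of two ideals with the same Hilbert function differ by a sequence of consecutive cancellations, it would then suffice to verify that the lex-plus-powers Betti table is maximal in this poset, i.e.\ admits no further cancellation.

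The hard part -- and the reason the conjecture remains open -- will be this compression stage: $R/\mathfrak{a}$ is not a polynomial ring, so neither Eliahou--Kervaire nor polarization is available for the intermediate compressed ideals, and no monotonicity of graded Betti numbers under Clements--Lindstr\"om compression is known in the mixed powers-plus-lex setting. A more realistic goal would be to upgrade the known partial EGH results -- on the degrees $a_1,\dots,a_n$, via liaison, via the structure of the regular sequence, and for $a_1=\dots=a_n=2$ with $n\le 5$ -- to the LPP statement case by case, by re-running each proof while tracking Betti numbers; for the licci ideals of Chong, for instance, one could hope to propagate the Betti comparison through each link using the mapping-cone resolution of a linked ideal, which gives a concrete if laborious route.
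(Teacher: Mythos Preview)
The statement you are attempting to prove is a \emph{conjecture}, and the paper does not contain a proof of it. Immediately after stating the LPP conjecture the paper writes: ``Just like the EGH conjecture, the LPP conjecture remains widely open.'' It then surveys partial results (Richert for $n=2,3$; Caviglia--Kummini on the Artinian reduction; Mermin--Murai for monomial regular sequences; Caviglia--Sammartano under a degree-growth hypothesis), but none of these amounts to a proof of the full statement. So there is no ``paper's own proof'' to compare your attempt against.

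Your write-up is honest about this: you describe it as a ``sketch of a possible approach,'' you flag the compression step over $R/\mathfrak{a}$ as the obstruction, and you correctly identify that the Bigatti--Hulett--Pardue machinery does not transfer because Eliahou--Kervaire and polarization are unavailable for the intermediate ideals. That diagnosis matches the state of the art. But as a \emph{proof} the proposal has a genuine gap at exactly the point you name: the monotonicity of graded Betti numbers under Clements--Lindstr\"om compression in $R/(x_1^{a_1},\dots,x_n^{a_n})$ is not known, and your sketch does not supply a mechanism to establish it. The first reduction also has a gap: it is not true in general that $\operatorname{gin}(I)$ contains $x_1^{a_1},\dots,x_n^{a_n}$ when $I$ contains a regular sequence of those degrees --- that assertion is essentially EGH itself in disguise, so invoking it is circular. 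In short, your outline is a reasonable research plan and is consistent with what the survey reports, but it is not a proof, and the paper does not claim one either.
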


Just like the EGH conjecture, the LPP conjecture remains widely open.  Nevertheless, there have been remarkable results obtained.  In \cite{Ri}, Richert showed the equivalence of the EGH conjecture and the LPP conjecture when $n=2,3$. Therefore, the LPP conjectures holds when $n=2$.  

Similar to Conjecture \ref{non-max_regseq} where ideal contains a regular sequence of length $\leq n$, one can restate the LPP conjecture by allowing non-maximal regular sequences $f_1,\ldots, f_r$ with $r<n$.  Caviglia and Kummini \cite{CK} showed that this case can be also reduced to Artinian case like the EGH conjecture.  In \cite{MM}, the LPP conjecture is shown to be true when the homogeneous ideal containing monomial regular sequence. Thus, when the regular sequence has full length then the LPP conjecture is true for the homogeneous ideals containing $x_1^{a_1},\ldots, x_n^{a_n}$. 

 In \cite{CS}, when characteristic of $K$ is 0, it was shown that the LPP conjecture holds for the homogeneous ideals containing a regular sequence with degrees $2\leq a_1 \leq \ldots \leq a_n$ if $a_i \geq\sum\limits_{ j=1}^{i-1} (a_j - 1) + 1$ for $i\geq 3$.

When $i=1$, the numbers $\beta_{1,j}(R/I)$ tells us how many generators the homogeneous ideal $I$ has in degree $j$. 
For a given homogeneous $I$ containing a regular sequence with degrees $0\leq a_1\leq \ldots \leq a_n$, the EGH conjecture claims the existence of the lex-plus-powers ideal $L$ associated with degrees $0\leq a_1\leq \ldots \leq a_n$ such that $\dim_K I_j = \dim_K L_j$ for all $j\geq 0$. Then it is well-known that this implies $\beta_{1,j}(R/L) \geq \beta_{1,j}(R/I)$. Thus, the EGH conjecture covers this particular case of the LPP conjecture when $i=1$. 

\begin{remark} 
Let  $I=(f_1,\ldots, f_n, g_1, g_2)\subseteq K[x_1,\ldots, x_n]$ be a homogeneous quadratic ideal where $f_1,\ldots, f_n$ is a regular sequence. Then, by Theorem \ref{GH}, we see that $\EGH_{{\bf \bar 2}, n}(2)$ holds for such quadratic ideals. Therefore, we get  $\dim_K I_3 \geq n^2+2n -5 = \dim_KL_3,$ where $L$ is the lex-plus-powers ideal $(x_1^2,\ldots, x_n^2, x_1x_2, x_1x_3)$.  This shows us that the number of the independent linear relations among the generators $f_1, \ldots, f_n$, $g_1$, $g_2$ is always at most $5$.
In other words, we obtain $\beta_{2,3}(R/I) \leq 5 = \beta_{2,3}(R/L)$ as well. 
\end{remark}

Richert and Sabourin, in \cite{RS}, showed that the following conjecture, a special case of the LPP conjecture when $i=n$, is equivalent the EGH conjecture. 

\begin{conjecture} Let $I$ be a homogeneous ideal in $R$ containing a regular sequence of degrees $2\leq a_1 \leq \ldots \leq a_n$ and let $L$ be a lex-plus-powers ideal associated with degrees $a_i$ such that $\Hf_{R/I}(i) = \Hf_{R/L}(i)$ for all $i\geq 0$. Then the dimension of the socle of $I$ is at most the dimension of socle of $L$ in every degree. In other words, 
$\beta_{n,j}(R/L) \geq \beta_{n,j}(R/I)$ for all $j\geq 0$.
\end{conjecture}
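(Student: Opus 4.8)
The plan is to prove the equivalence by relating the socle statement, via linkage, to the $i=1$ case of the Lex-Plus-Powers conjecture. Since the conjecture above presupposes the existence of a lex-plus-powers ideal $L$ with $\Hf_{R/L}=\Hf_{R/I}$ -- and asserting this for every $I$ containing a regular sequence of degrees $a_1,\dots,a_n$ is exactly $\EGH_{(a_1,\dots,a_n),n}$ -- one direction of the equivalence is a tautology. The substance is the converse: granting $\EGH_{(a_1,\dots,a_n),n}$, the lex-plus-powers ideal $L$ it produces (unique with its Hilbert function) actually satisfies $\beta_{n,j}(R/L)\ge\beta_{n,j}(R/I)$ for all $j$. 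Here $R/I$ is Artinian because $I$ contains a length-$n$ regular sequence, so $\projdim_R(R/I)=n$ and $\beta_{n,j}(R/I)=\dim_K\Soc(R/I)_{j-n}$, and likewise for $L$ since $L\supseteq(x_1^{a_1},\dots,x_n^{a_n})$; thus what must be shown is $\dim_K\Soc(R/I)_j\le\dim_K\Soc(R/L)_j$ for all $j$.

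The first step is to set up a linkage dictionary for the socle. Write $\mathfrak c=(f_1,\dots,f_n)\subseteq I$, $\mathfrak p=(x_1^{a_1},\dots,x_n^{a_n})\subseteq L$, and $s=\sum_{i=1}^n(a_i-1)$, and form the linked ideals $J=\mathfrak c:I$ and $L'=\mathfrak p:L$. Because $R/\mathfrak c$ and $R/\mathfrak p$ are Artinian Gorenstein with socle degree $s$ and the same Hilbert function (Remark \ref{HF_of_ci}), the graded canonical module -- equivalently the graded Matlis dual -- of $R/I$ is $(J/\mathfrak c)(s)$, and dualizing the minimal free resolution gives $\dim_K\Soc(R/I)_j=\mu_{s-j}(J/\mathfrak c)$, where $\mu_\ell(\,\cdot\,)$ denotes the number of minimal generators in degree $\ell$; the same identity holds with $(I,\mathfrak c,J)$ replaced by $(L,\mathfrak p,L')$. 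Moreover, by Proposition \ref{dual} and Remark \ref{HF_of_ci}, $J$ and $L'$ have the same Hilbert function. So the problem reduces to the generator-count inequality $\mu_\ell(J/\mathfrak c)\le\mu_\ell(L'/\mathfrak p)$ for every $\ell$.

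To obtain this, I would use that $J=\mathfrak c:I$ again contains a regular sequence of degrees $a_1,\dots,a_n$, so $\EGH_{(a_1,\dots,a_n),n}$ applies to $J$ and supplies the unique lex-plus-powers ideal $M$ with $\Hf_{R/M}=\Hf_{R/J}=\Hf_{R/L'}$. Two inputs then close the loop. First, $\EGH_{(a_1,\dots,a_n),n}$ implies the $i=1$ case of the Lex-Plus-Powers conjecture (observed above), so $\beta_{1,\ell}(R/J)\le\beta_{1,\ell}(R/M)$. Second, the link $L'=\mathfrak p:L$ of a lex-plus-powers ideal by the complete intersection of pure powers is again a lex-plus-powers ideal -- a refinement of the Clements--Lindstr\"om theorem whose weaker ``same Hilbert function'' form already underlies the proof of Lemma \ref{lemma} -- so by uniqueness $L'=M$. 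Transporting these facts through the inclusions $\mathfrak c\hookrightarrow J$ and $\mathfrak p\hookrightarrow L'$, which relate the minimal generators of the ideals $J,L'$ to those of the modules $J/\mathfrak c,L'/\mathfrak p$, then yields $\mu_\ell(J/\mathfrak c)\le\mu_\ell(L'/\mathfrak p)$ and hence the desired socle inequality.

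I expect the main obstacle to be exactly the interplay invoked in that last sentence. The $i=1$ inequality concerns numbers of ideal generators, whereas the socle statement -- after linkage -- concerns numbers of module generators of $J/\mathfrak c$ and $L'/\mathfrak p$; these differ by correction terms concentrated in degrees $a_1,\dots,a_n$, counting which of the $f_i$ (resp. $x_i^{a_i}$) survive as minimal generators of $J$ (resp. $L'$), and one must verify that the correction for $L$ never exceeds that for $I$ (which works because both are governed only by the degree sequence $(a_1,\dots,a_n)$ that $\mathfrak c$ and $\mathfrak p$ share). More fundamentally, the assertion that linking a lex-plus-powers ideal by $\mathfrak p$ returns a lex-plus-powers ideal -- rather than merely an ideal with the same Hilbert function as one -- is where the combinatorics of the Clements--Lindstr\"om order is genuinely needed: without it one only gets $\mu_\ell(L'/\mathfrak p)\le\mu_\ell(M/\mathfrak p)$, the wrong direction, so this compatibility is the crux of the equivalence.
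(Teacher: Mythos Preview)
The statement you were asked to address is a \emph{conjecture} in the paper, not a theorem, and the paper gives no proof of it. Immediately before stating it, the paper records that Richert and Sabourin \cite{RS} proved this socle conjecture is \emph{equivalent} to the EGH conjecture; since EGH is open, so is the socle conjecture. There is thus no ``paper's own proof'' to compare your attempt against.

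What you have written is not a proof of the conjecture but a sketch of the \emph{equivalence} with EGH --- which the paper merely cites from \cite{RS} and does not reproduce. Even granting that reinterpretation, your treatment of one direction is flawed. You call the implication ``socle conjecture $\Rightarrow$ EGH'' a tautology on the grounds that the socle conjecture ``presupposes'' the existence of $L$. But as stated, the conjecture is a conditional: \emph{if} a lex-plus-powers ideal $L$ with the same Hilbert function as $I$ exists, \emph{then} $\beta_{n,j}(R/L)\ge\beta_{n,j}(R/I)$. A conditional whose hypothesis might be vacuous does not imply its own hypothesis; deducing EGH from the socle conjecture is the genuinely nontrivial half of \cite{RS}, and it is not addressed in your proposal.

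For the direction you do treat, EGH $\Rightarrow$ socle inequality, your linkage outline via $J=\mathfrak c:I$ and $L'=\mathfrak p:L$, together with the Clements--Lindstr\"om-type fact that $L'$ is again lex-plus-powers, is indeed the standard route and identifies the right pressure points (the generator-count corrections in degrees $a_i$ and the need for $L'$ to be lex-plus-powers on the nose, not merely up to Hilbert function). But since the survey contains no argument here, there is nothing in the paper to which this can be compared.
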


The LPP conjecture seems much harder than the EGH conjecture due to its strong claim on every graded Betti numbers, yet focusing on certain Betti numbers as its special cases opens up many new directions to work on.

Finally, there are some interesting applications of the EGH conjecture worth to mention.
In \cite{HWW}, Harima-Wachi-Watanabe show that, assuming the EGH conjecture is true, every graded complete intersection has the Sperner property, which simply says for a graded complete intersection $A = R/(f_1,\ldots, f_n)$, 
$\max\{ \mu(I)\ | \ I \text{ is an ideal in } A\} = \max\{\dim_K A_i  \ | \ i=0,1,2,\ldots \}$, where $\mu(I)$ is the number of minimal generators of $I$.  It is also known that the Sperner property is related to the so-called Weak Lefschetz property.

Due to geometric background of the EGH conjecture as a result of its connection to Cayley-Bacharach theory, EGH has applications in more geometric settings as well.  For instance, a recent work by Jorgenson \cite{Jor} points out that an affirmative answer for EGH has an implication on the sequence of secant indices of Veronese varieties of $\mathbb{P}^n$ (see Question 3.2 in \cite{Jor}.)

Another application of the EGH conjecture coincides with a very famous problem on decomposing real polynomials in $n$ variables as a sum of squares of real polynomials.  The cone of real polynomials that can be decomposed as a sum of squares of real polynomials is simply referred as SOS cone. In a recent work by Laplagne and Valdettaro\cite{LV}, they show that, when EGH holds, for a strictly positive polynomial on the boundary of the SOS cone, they provide bounds for the maximum number of polynomials that can appear in a SOS decomposition and the maximum rank of the matrices in the Gram spectrahedron.

\section*{Acknowledgements} The author thanks Mel Hochster for introducing and proposing to work on the EGH conjecture during her postdoctoral research. She is deeply grateful for all of their conversations.  The author thanks the referee for their valuable feedback and comments.  She also thanks Martin Kreuzer for pointing out their work.
\bigskip

%
%
%
%
%
%
%
%
%
%

\end{document}